\newtheorem{theorem}{Theorem}[section]
\newtheorem*{lemma*}{Lemma}
\theoremstyle{definition}
\newtheorem{remark}[theorem]{Remark}
\theoremstyle{remark}
\numberwithin{equation}{section}
\newcommand\numberthis{\addtocounter{equation}{1}\tag{\theequation}}
\renewcommand{\bar}[1]{\overline{#1}}
\newcommand{\yy}{\mathbb{Y}}
\newcommand{\dd}{\mathbb{D}}
\newcommand{\abs}[1]{\lvert#1\rvert}
\newcommand{\C}{\mathbb{C}}
\newcommand{\W}{\mathscr{W}}
\renewcommand{\L}{\mathscr{L}}
\newcommand{\I}{\mathcal{I}}
\newcommand{\M}{\mathcal{M}}
\newcommand{\R}{\mathbb{R}}
\newcommand{\X}{\mathbb{X}}
\newcommand{\Y}{\mathbb{Y}}
\newcommand{\dtext}{\textnormal d}
\newcommand{\onto}{\xrightarrow[]{{}_{\!\!\textnormal{onto\,\,}\!\!}}}
\def\leq{\leqslant}
\def\geq{\geqslant}
\def\le{\leqslant}
\def\Xint#1{\mathchoice
{\XXint\displaystyle\textstyle{#1}}%
{\XXint\textstyle\scriptstyle{#1}}%
{\XXint\scriptstyle\scriptscriptstyle{#1}}%
{\XXint\scriptscriptstyle\scriptscriptstyle{#1}}%
\!\int}
\def\XXint#1#2#3{{\setbox0=\hbox{$#1{#2#3}{\int}$}\vcenter{\hbox{$#2#3$}}\kern-.5\wd0}}
\def\dashint{\Xint-}
\def\XXiint#1#2#3{{\setbox0=\hbox{$#1{#2#3}{\iint}$}\vcenter{\hbox{$#2#3$}}\kern-.5\wd0}}
\begin{document}
\title[Sobolev Homeomorphic Extensions]{Characterizing Sobolev Homeomorphic Extensions via Internal Distances}

\author[A. Koski]{Aleksis Koski}
\address{Department of Mathematics and Systems Analysis, Aalto University, FI-00076 Aalto, Finland}
\email{aleksis.koski@aalto.fi}

\author[J. Onninen]{Jani Onninen}
\address{Department of Mathematics, Syracuse University, Syracuse,
NY 13244, USA and  Department of Mathematics and Statistics, P.O.Box 35 (MaD) FI-40014 University of Jyv\"askyl\"a, Finland
}
\email{jkonnine@syr.edu}

\author[H. Xu]{Haiqing Xu}
\address{Frontiers Science Center for Nonlinear Expectations (Ministry of Education of China), Research Center for Mathematics and Interdisciplinary Sciences, Shandong University, Qingdao 266237, China}
\email{haiqing.xu@email.sdu.edu.cn}

\thanks{A. Koski was supported by the Academy of Finland grant number 355840.
J. Onninen was supported by the NSF grant  DMS-2154943.   
H. Xu was supported by the National Natural Science Foundation of China (No.~12201349)}

\subjclass[2010]{Primary 46E35. Secondary 42B25, 58E20}


\keywords{Sobolev homeomorphisms, Sobolev extensions, Jordan-Sch\"onflies theorem, Gagliardo's trace theorem}

\begin{abstract} 
We give a full  characterization of embeddings of the unit circle that admit a Sobolev homeomorphic extension to the unit disk. As a direct corollary, 
we establish that for quasiconvex target domains $\Y$, any homeomorphism $\varphi \colon \partial \mathbb{D} \onto \partial \Y$ that admits a continuous $\W^{1,p}$-extension to the unit disk $\mathbb{D}$ also admits a $\W^{1,p}$-homeomorphic extension. These Sobolev variants of the classical Jordan-Sch\"onflies theorem are essential for ensuring  the well-posedness of variational problems arising in Nonlinear Elasticity and Geometric Function Theory.
\end{abstract}

\maketitle
\section{Introduction}
Let $\mathbb S $ denote the planar unit circle, and let $\varphi \colon  \mathbb S \to \mathbb{C} $ be a topological embedding. Two
classical extension theorems apply: The {\it Jordan--Sch\"onflies theorem} asserts that there exists a homeomorphism $h \colon  \mathbb{C} \to \mathbb{C}$ such that $ h $ agrees with $\varphi$ on  $ \mathbb S$. In particular, the set $\varphi(\mathbb S)$ separates the plane into two domains: one bounded and the other unbounded. 

Throughout this text,  $\mathbb Y \subset \mathbb{C}$ denotes a bounded Jordan domain, and $\mathbb D$  the unit disk. Another classical extension theorem asserts that a homeomorphism  $\varphi \colon \partial \mathbb D  \onto \partial \mathbb Y$ admits a continuous extension to $\overline{ \mathbb D }$ in the Sobolev class $\W^{1,p} (\mathbb D, \mathbb  C)$, $1<p<\infty$, if and only if it satisfies the so-called {\it $p$-Douglas condition},
\begin{equation}\label{eq:pdouglas}
\int_{\partial \mathbb D} \int_{\partial \mathbb D}  \frac{\abs{\varphi (x) - \varphi (y)}^p}{\abs{x-y}^p} \dtext x \dtext y < \infty \, , \qquad 1<p< \infty \, . 
\end{equation}
This condition encapsulates the regularity of $\varphi$ through a delicate interplay between boundary behavior and Sobolev smoothness. Its origins trace back to the classical  work of Douglas~\cite{Do} for $p=2$. In full generality, this result is part of {\it Gagliardo’s trace theorem}~\cite{Ga}, a cornerstone in the theory of boundary value problems, characterizing the boundary data that permit Sobolev extensions into the domain. This theorem plays a fundamental role in PDEs, geometric analysis, and variational problems. If $p=1$,   {Gagliardo's theorem} states that  a given homeomorphism $\varphi \colon \partial \mathbb D \onto \partial \Y$ has a Sobolev extension to $\mathbb D$ in $\W^{1,1} (\mathbb D , \mathbb C)$ exactly when $\varphi \in \L^1(\partial \mathbb D)$. The case $p=\infty$, on the other hand, follows from the classical {\it Kirszbraun extension theorem}~\cite{Ki} which says that a mapping $\varphi \colon \partial \mathbb D \to \C$ has a Lipschitz extension to $\overline{\mathbb D}$ if and only if $\varphi$ is Lipschitz regular.

A natural and fundamental question arises,  motivated by the well-posedness of homeomorphic variational problems in Geometric Function Theory~\cite{AIMb, IMb, IMOsur2,   Reb} and Nonlinear Elasticity~\cite{Anb, Bac, Ba2, Cib}:  \emph{Can one construct an extension that satisfies both the Jordan–Schönflies theorem and the Sobolev condition?}
In general, the answer to this Sobolev variant of the Jordan–Sch\"onflies theorem is fundamentally negative.

The only case where a positive result is guaranteed is when $p=\infty$: every Lipschitz homeomorphism $\varphi \colon \partial \mathbb D \onto \partial \Y$ admits a Lipschitz homeomorphic extension to $\mathbb D$,~\cite{Kovalev, HKOext}. For each $p < \infty $, however, there exist Jordan domains $\mathbb{Y}$ and boundary homeomorphisms $\varphi \colon \partial \mathbb{D} \to \partial \mathbb{Y} $ that satisfy the $p$-Douglas condition but fail to admit even a $\W^{1,1}$-homeomorphic extension to the disk~\cite{Zh, KOext3}. The core difficulty lies in the absence of general methods for constructing Sobolev homeomorphisms, reflecting a significant gap in the literature.

Nevertheless,  the main result of this paper establishes a homeomorphic counterpart of the $p$-Douglas condition~\eqref{eq:pdouglas}, providing a complete characterization of boundary homeomorphisms   $\varphi \colon \partial \mathbb D \onto \partial \Y$ that admit a Sobolev homeomorphic extension to the unit disk.

\begin{theorem}\label{thm:main}
For $1< p < \infty $, a  homeomorphism $\varphi \colon \partial \mathbb D \onto \partial \mathbb Y$ admits a homeomorphic extension $h \colon \overline{\mathbb D} \onto \overline{\mathbb Y}$ in the Sobolev class $\W^{1,p} (\mathbb D, \mathbb C)$ if and only if it satisfies the \emph{internal $p$-Douglas condition}; that is, 
\begin{equation}\label{eq:internalDouglas}
\int_{\partial \dd}\int_{\partial \dd} \frac{[d_{\yy}(\varphi(x),\varphi(y)]^p}{|x-y|^p} \, dx \, dy \ < \ \infty.
\end{equation}
\end{theorem}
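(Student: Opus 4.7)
The plan is to prove both implications by analyzing how boundary data propagates along curves through the disk. For \emph{necessity}, suppose $h \in W^{1,p}(\mathbb{D},\mathbb{C})$ is a homeomorphic extension of $\varphi$. For $x,y \in \partial \mathbb{D}$, the chord $[x,y]\cap\overline{\mathbb{D}}$ has image $h([x,y])$ joining $\varphi(x)$ to $\varphi(y)$ in $\overline{\mathbb{Y}}$, and by injectivity of $h$ its interior lies in the open domain $\mathbb{Y}$. Hence
\[
d_{\mathbb{Y}}(\varphi(x),\varphi(y)) \leq \ell\bigl(h|_{[x,y]}\bigr),
\]
and for almost every chord the ACL characterization of Sobolev maps yields $\ell(h|_{[x,y]}) \leq \int_{[x,y]}|Dh|\,ds$. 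The classical Gagliardo-type Fubini argument applied to the family of chords then produces the bound
\[
\int_{\partial\mathbb{D}}\!\int_{\partial\mathbb{D}}\!\frac{[d_{\mathbb{Y}}(\varphi(x),\varphi(y))]^p}{|x-y|^p}\,dx\,dy \lesssim \int_{\mathbb{D}}|Dh|^p < \infty.
\]

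\emph{Sufficiency} requires an explicit construction. Partition $\partial\mathbb{D}$ dyadically into arcs $I_{j,k}$, $0\leq k<2^j$, with endpoints $\zeta_{j,k}$, and build a matching triangulation of $\overline{\mathbb{D}}$ whose cells at depth $j$ have Euclidean diameter $\sim 2^{-j}$. On the target side, join each pair $\varphi(\zeta_{j,k}),\varphi(\zeta_{j,k+1})$ by a curve $\gamma_{j,k}\subset\overline{\mathbb{Y}}$ of length within $(1+2^{-j})$ of $d_{\mathbb{Y}}(\varphi(\zeta_{j,k}),\varphi(\zeta_{j,k+1}))$, and use the $\gamma_{j,k}$ as the ``radial edges'' of an analogous cellular decomposition of $\overline{\mathbb{Y}}$. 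Define $h$ to match vertices of the two complexes and extend affinely on each triangle. A direct calculation on affine triangles yields
\[
\int_{\mathbb{D}}|Dh|^p \lesssim \sum_{j,k}[d_{\mathbb{Y}}(\varphi(\zeta_{j,k}),\varphi(\zeta_{j,k+1}))]^p \, 2^{-j(2-p)},
\]
and a standard dyadic-continuous comparison identifies the right-hand side with a constant multiple of the internal $p$-Douglas integral \eqref{eq:internalDouglas}. Hence finiteness of \eqref{eq:internalDouglas} forces $h\in W^{1,p}$.

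\emph{The principal obstacle} is guaranteeing that the construction is actually a homeomorphism. In a non-convex Jordan domain, internal geodesics can cross each other, so naively using near-geodesic arcs $\gamma_{j,k}$ destroys the topological nesting required for injectivity of the piecewise affine extension. The plan is to select the $\gamma_{j,k}$ inductively in $j$, each new arc constrained to lie in the Jordan subregion of $\overline{\mathbb{Y}}$ already cut off at the previous level, and to verify that the internal metric on each such subregion remains comparable to the restriction of $d_{\mathbb{Y}}$. This nested selection both enforces homeomorphy and preserves the length bounds needed for the Sobolev estimate.
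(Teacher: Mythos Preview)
Both directions of your outline contain real gaps.

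\textbf{Necessity.} Averaging $|Dh|$ over the Euclidean chord $[x,y]$ and then invoking a ``Fubini argument'' does not give the stated bound. After H\"older along each chord one is left with $\int_{\mathbb D}|Dh|^p\,K$, where $K(z)$ is the density at $z$ of the pushforward of $d\alpha\,d\beta\,dt$ under $(\alpha,\beta,t)\mapsto(1-t)e^{i\alpha}+te^{i\beta}$; a coarea computation shows $K(z)\approx(1-|z|)^{-1/2}$ near $\partial\mathbb D$, so no control by $\|Dh\|_{L^p}^p$ follows. The gap is repairable: replace the chord by a ``tent'' path (radial $x\to(1-r)x$, circular arc at radius $1-r$, radial $(1-r)y\to y$, with $r\approx|x-y|$), split $d_{\mathbb Y}(\varphi(x),\varphi(y))$ by the triangle inequality, and then Hardy's inequality handles the two radial pieces and a one-line Fubini the arc piece. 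The paper takes yet another route, bounding $d_{\mathbb Y}(\varphi(m-r),\varphi(m+r))$ by the image length of an entire circle through $m\pm r$ centred at a nearby interior point and then invoking the $L^p$-boundedness of an interpolated spherical/Hardy--Littlewood maximal operator $\mathcal M_p$ (Bourgain--Stein).

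\textbf{Sufficiency.} Here two steps genuinely fail. First, fixing the crosscut endpoints at the dyadic vertices $\zeta_{j,k}$ is unjustified: there is no ``standard dyadic--continuous comparison'' that bounds $\sum_{j,k}d_{\mathbb Y}(\varphi(\zeta_{j,k}),\varphi(\zeta_{j,k+1}))^p\,2^{j(p-2)}$ by the integral \eqref{eq:internalDouglas}, because point evaluations of $d_{\mathbb Y}(\varphi(\cdot),\varphi(\cdot))$ need not be dominated by local averages. The paper instead \emph{chooses} each endpoint as a near-minimiser over an eighth-subarc, so that every summand is controlled by a pairwise disjoint piece $\mathcal I_{n,k}$ of the double integral. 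Second, your proposed fix for crossings --- constrain $\gamma_{j,k}$ to the subregion cut off at level $j-1$ and then ``verify that the internal metric on each such subregion remains comparable to $d_{\mathbb Y}$'' --- does not iterate with a uniform constant: Gehring--Hayman only compares lengths of hyperbolic geodesics of the \emph{current} ambient domain to its internal metric, so passing to geodesics of nested subregions compounds the comparison constant to $C^{\,j}$, which destroys the Sobolev estimate. The paper avoids this by using hyperbolic geodesics of $\mathbb Y$ itself at every stage (hence a single Gehring--Hayman constant) and engineering non-crossing \emph{combinatorially}: endpoints are placed in staggered eighth-subarcs, with the pattern mirrored between even and odd generations, and two interlaced families $\Gamma_{n,k},\Gamma^*_{n,k}$ are built whose intersection pattern yields nested Jordan ``dyadic ceilings'' $\Phi_n\subset\mathbb Y$ that never touch $\partial\mathbb Y$. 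The cells between consecutive ceilings are then filled by homeomorphic Lipschitz extensions; note that ``extend affinely on each triangle'' makes no sense here, since the target cells are bounded by arcs of hyperbolic geodesics, not straight segments.
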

Here the \emph{internal distance} $d_{\yy} : \bar{\yy} \times \bar{\yy} \to [0,\infty)$ is the metric on $\bar{\yy}$ defined by
\[d_{\yy}(x,y) = \inf_\gamma |\gamma|,\]
where $|\gamma|$ denotes the length of $\gamma$, and the infimum ranges over all rectifiable curves $\gamma \subset \bar{\yy}$ which connect $x$ to $y$.

The classical Gagliardo's trace theorem characterizes traces of Sobolev spaces when the reference configuration is a Lipschitz domain. Since the Sobolev spaces under consideration  and the internal $p$-Douglas condition~\eqref{eq:internalDouglas} remain invariant under a global bi-Lipschitz change of variables in  the reference domains, Theorem~\ref{thm:main},  the homeomorphic counterpart of Gagliardo's trace theorem,  immediately extends to the case of Lipschitz domains (an axiomatic assumption in the theory of Nonlinear Elasticity).

\begin{theorem}
Let $\X$ and $\Y$ be simply connected bounded Lipschitz domains in the complex plane and  $1<p<\infty$. A  boundary homeomorphism $\varphi \colon \partial \mathbb X \onto \partial \mathbb Y$ admits a homeomorphic extension $h \colon \overline{\mathbb X} \onto \overline{\mathbb Y}$ in the Sobolev class $\W^{1,p} (\mathbb X, \mathbb C)$ if and only if 
\[
\int_{\partial \X}\int_{\partial \X} \frac{[d_{\yy}(\varphi(x),\varphi(y)]^p}{|x-y|^p} \, dx \, dy \ < \ \infty.
\]
\end{theorem}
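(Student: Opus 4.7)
The plan is to deduce the statement directly from Theorem~\ref{thm:main} by transporting the problem to the disk via a bi-Lipschitz change of variables on the reference domains.

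\emph{Step 1.} For any bounded simply connected planar Lipschitz domain $\X$, there exists a bi-Lipschitz homeomorphism $\Phi \colon \overline{\dd} \onto \overline{\X}$. This is a classical fact: the chord-arc property of a planar Lipschitz curve furnishes a bi-Lipschitz parametrization of $\partial \X$ by the unit circle, and such a boundary parametrization admits a bi-Lipschitz extension to the interior (e.g.\ via a Tukia-type construction, or directly through the Lipschitz graph charts of $\partial \X$). Let $\Phi_0 := \Phi|_{\partial \dd}$, which is bi-Lipschitz between the two boundary curves and satisfies $|\Phi_0(u) - \Phi_0(v)| \asymp |u-v|$.

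\emph{Step 2.} Set $\psi := \varphi \circ \Phi_0 \colon \partial \dd \onto \partial \Y$. The internal $p$-Douglas condition for $\varphi$ on $\partial \X$ is equivalent to the internal $p$-Douglas condition for $\psi$ on $\partial \dd$. Indeed, the bi-Lipschitz substitution $x = \Phi_0(u)$, $y = \Phi_0(v)$ transforms the boundary arc-length elements with Jacobians bounded above and below, and $|x-y|\asymp|u-v|$. Since the internal distance $d_\yy$ depends only on the target $\Y$ and is unaffected by a change of variables in the reference domain,
\[
\int_{\partial \X}\int_{\partial \X} \frac{[d_{\yy}(\varphi(x),\varphi(y))]^p}{|x-y|^p} \, dx \, dy \ \asymp \ \int_{\partial \dd}\int_{\partial \dd} \frac{[d_{\yy}(\psi(u),\psi(v))]^p}{|u-v|^p} \, du \, dv.
\]

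\emph{Step 3.} Combine with Theorem~\ref{thm:main}. If $\varphi$ satisfies the internal $p$-Douglas condition, then so does $\psi$, and Theorem~\ref{thm:main} produces a homeomorphic extension $H \in \W^{1,p}(\dd, \cc)$ of $\psi$ with $H(\overline{\dd}) = \overline{\Y}$. Then $h := H \circ \Phi^{-1} \colon \overline{\X} \onto \overline{\Y}$ is a homeomorphic extension of $\varphi$, and $h \in \W^{1,p}(\X, \cc)$ because composition with the bi-Lipschitz map $\Phi^{-1}$ preserves the $\W^{1,p}$ class. Conversely, if $\varphi$ admits a homeomorphic extension $h \in \W^{1,p}(\X, \cc)$, then $H := h \circ \Phi \in \W^{1,p}(\dd, \cc)$ is a homeomorphic extension of $\psi$, and Theorem~\ref{thm:main} yields the internal $p$-Douglas condition for $\psi$, hence for $\varphi$.

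\emph{Main obstacle.} No substantial new analytic difficulty arises; the argument is essentially a bi-Lipschitz transplantation. The only input beyond Theorem~\ref{thm:main} is the existence of the global bi-Lipschitz parametrization $\Phi$ — a standard feature of bounded simply connected planar Lipschitz domains — together with the routine verification of the bi-Lipschitz invariance of the double boundary integral and of the Sobolev class.
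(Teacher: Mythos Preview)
Your proposal is correct and matches the paper's own justification: the paper does not give a separate proof but simply remarks that Theorem~\ref{thm:main} ``immediately extends to the case of Lipschitz domains'' because both the Sobolev class and the internal $p$-Douglas condition are invariant under a global bi-Lipschitz change of variables in the reference domain. Your Steps~1--3 are precisely the details of that remark.
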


Similar to the classical trace theorem, the case $p = 1$ exhibits distinct behavior. Although the condition~\eqref{eq:internalDouglas} with $p = 1$ ensures the existence of a $\W^{1,1}$-homeomorphic extension to the unit disk $\mathbb{D}$ (see Section~\ref{sec:sufficiency}), it is not a necessary condition.  

\begin{theorem}\label{thm:p=1case}  
 There exists a homeomorphism $h \colon \bar{\mathbb{D}} \to \bar{\mathbb{Y}}$ belonging to $\W^{1,1}(\mathbb{D}, \mathbb{C}) $ whose restriction to the boundary does not satisfy the internal Douglas condition \eqref{eq:internalDouglas} for $p = 1$.  
\end{theorem}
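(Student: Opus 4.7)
The plan is to exhibit an explicit bounded Jordan domain $\mathbb{Y}$ and a homeomorphism $h \in \W^{1,1}(\mathbb{D}, \mathbb{C})$ onto $\overline{\mathbb{Y}}$ whose boundary trace $\varphi$ fails the internal Douglas condition \eqref{eq:internalDouglas} at $p = 1$. The conceptual point driving the construction is that at $p = 1$ Gagliardo's trace theorem only requires $\varphi \in \L^1(\partial \mathbb{D})$ and imposes no quantitative modulus-of-continuity control, in stark contrast with the case $p > 1$. This slack is precisely what permits a $\W^{1,1}$ homeomorphic extension whose boundary exhibits large $d_\mathbb{Y}$-oscillations on arbitrarily small arcs of $\partial \mathbb{D}$.

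I would take $\mathbb{Y}$ to be a domain obtained from a disk by excising a sequence of disjoint thin rectangular pockets $\{S_n\}_{n \ge 1}$, where $S_n$ has depth $L_n$ of order $1$ and width $w_n$ with $w_n \to 0$ rapidly, and the pockets accumulate at a single boundary point so that $\partial \mathbb{Y}$ remains a Jordan curve (of possibly infinite length). The map $h$ is built piecewise: on disjoint sub-arcs $\Gamma_n \subset \partial \mathbb{D}$ of length $\ell_n$, $\varphi = h|_{\Gamma_n}$ parametrizes $\partial S_n$ homeomorphically by a carefully chosen, tip-compressed parametrization; in the interior, each thin wedge $W_n$ over $\Gamma_n$ maps onto the region of $\mathbb{Y}$ immediately surrounding $S_n$ by an essentially one-directional (tangential) stretching, and the remaining bulk of $\mathbb{D}$ is treated by a single bi-Lipschitz model.

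The $\W^{1,1}$-energy of $h$ is controlled by the sum of wedge contributions $E_n = E_n(L_n, w_n, \ell_n, r_n)$, with $r_n$ the radial extent of $W_n$; since each wedge is thin and the slit $S_n$ has small area $L_n w_n$, rapid decay of the parameters gives $\sum_n E_n < \infty$. For the failure of \eqref{eq:internalDouglas}, the dominant contributions come from pair interactions near the accumulation point: for $x \in \Gamma_n$ whose image lies close to the tip of $S_n$ and $y \in \Gamma_m$ whose image lies close to the tip of $S_m$ (with either $n = m$ on opposite walls, or $n \ne m$), any curve in $\overline{\mathbb{Y}}$ from $\varphi(x)$ to $\varphi(y)$ must detour around the pockets, giving $d_\mathbb{Y}(\varphi(x), \varphi(y)) \gtrsim L_n + L_m$; meanwhile $|x - y|$ is controlled by the small on-$\partial\mathbb{D}$ separations dictated by the accumulation. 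Summing these contributions, and choosing the pocket widths, positions and preimage arcs appropriately, produces the divergence of the double integral.

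The principal obstacle is precisely this simultaneous balancing: under a naive arc-length-proportional parametrization coupled with a ``fat'' wedge, the per-pocket values of $E_n$ and of the Douglas contribution are of the same order and no separation between the two sums is achievable. The resolution, and the technical heart of the construction, is to use wedges $W_n$ that are very thin in the radial direction (so that $\int_{W_n} |Dh|$ stays essentially one-dimensional and summable) together with a compressed tip-preimage on $\partial \mathbb{D}$ (so that the Douglas integrand is amplified near each tip preimage and across pairs of neighbouring tip preimages), thereby producing the required divergence of \eqref{eq:internalDouglas} while retaining $h \in \W^{1,1}$.
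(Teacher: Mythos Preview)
There is a genuine gap. If the rectangular pockets $S_n$ accumulate at a single boundary point and $h \colon \overline{\mathbb{D}} \to \overline{\mathbb{Y}}$ is a homeomorphism, then $\varphi = h|_{\partial \mathbb D}$ is continuous at the accumulation point of the arcs $\Gamma_n$, so the images $\varphi(\Gamma_n) = \partial S_n$ must shrink to that point; since $\diam(\partial S_n) \geq L_n$, this forces $L_n \to 0$, contradicting your assumption that the depths are of order $1$. (The ``single bi-Lipschitz model'' on the bulk fails for the same reason: the bulk of $\mathbb D$ has wedge-notches of depth $r_n$, the bulk of $\mathbb Y$ has rectangular notches of depth $\approx L_n$, and these cannot be bi-Lipschitz equivalent unless $r_n \approx L_n$.) Once $L_n \to 0$ is accepted, your divergence mechanism collapses. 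In a comb domain the shortest path from a point in pocket $n$ to a point in pocket $m$ exits each pocket and crosses the disk directly, without entering any intermediate pockets, so $d_{\mathbb Y}(\varphi(x),\varphi(y))$ is at most $s_x + s_y$ plus the small mouth-to-mouth distance. Summing the resulting contributions over all pairs $(n,m)$ yields at most a constant times $\sum_n \ell_n L_n$, which is finite because $\sum_n \ell_n < \infty$ and $L_n \to 0$. Neither tip compression nor thin wedges rescues this: the former only reduces the measure of ``deep'' preimage pairs, and the latter affects the energy side but not the boundary integral.

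The paper's construction bypasses this by making $\mathbb Y$ a single thin zigzag tube rather than a comb, so that $d_{\mathbb Y}$ between any two boundary points is comparable to the boundary arc length between them. With arcs $I_n \subset \partial\mathbb D$ of length $2^{-n}$ mapping to zigzag pieces $J_n$ of length $\approx 2^n/n^2$, one obtains $d_{\mathbb Y}(\varphi(x),\varphi(y)) \approx \sum_{k=n}^m |J_k| \approx 2^m/m^2$ for $x \in I_n$, $y \in I_m$, $m>n$; this is governed by the \emph{far} index $m$. Each $(n,m)$ block of the internal Douglas integral then contributes $\approx 1/m^2$ independently of $n$, so the double sum diverges via $\sum_m \sum_{n<m} 1/m^2 \approx \sum_m 1/m$, while the $\W^{1,1}$-energy over the $n$th block is $\approx |I_n|\,|J_n| = 1/n^2$. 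The structural idea you are missing is that $d_{\mathbb Y}$ must accumulate length from all intermediate scales, which disjoint pockets excised from a disk cannot provide.
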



Theorem~\ref{thm:main} naturally leads to a follow-up question: For which Jordan domains $\Y$ are the conditions~\eqref{eq:pdouglas} and~\eqref{eq:internalDouglas} equivalent? In other words,  for which Jordan domains $\Y$ does every homeomorphism $\varphi \colon \partial \mathbb{D} \onto \partial \Y$ that admits a continuous $\W^{1,p}$-extension to $\mathbb{D}$ also admit a $\W^{1,p}$-homeomorphic extension?    

For Lipschitz regular targets $\Y$, this equivalence holds. Indeed,   in such cases, the problem reduces to  $\Y=\mathbb D$ by applying a bilipschitz change of variables.  The equivalence is then immediate, and analytic tools such as the  Rad\'o-Kneser-Choquet (RKC) theorem~\cite{Dub}  provide a direct method to construct the desired homeomorphic extension.  Specifically, the RKC theorem guarantees that any boundary homeomorphism $\varphi \colon \partial \mathbb D \onto \partial \mathbb D$ admits a homeomorphic harmonic extension to $\mathbb D$. This harmonic extension belongs to $\W^{1,p}(\mathbb{D}, \R^2)$  if and only if $\varphi$ satisfies the $p$-Douglas condition~\cite[Theorem 1.2]{KObiSobo}. Moreover, it always lies in  $\W^{1,p}(\mathbb{D}, \R^2)$ for every $p<2$, see~\cite{Ve}.

Earlier attempts to address this question highlight the complexity of the problem, particularly for Jordan domains with rectifiable boundaries. Consider first the case where the target domain $\Y$ has a piecewise smooth boundary, $p \neq 2 $, and the boundary mapping $ \varphi \colon \partial \mathbb{D} \onto \partial \Y $ admits a $\W^{1,p} $-Sobolev extension to  $\mathbb{D} $. In this case, it is known that $ \varphi $ also admits a homeomorphic extension to $ \mathbb{D} $ in the Sobolev space $\W^{1,p} (\mathbb{D}, \mathbb{R}^2)$; see~\cite{KOext3}. However, this result fails to generalize to the case $p = 2$, due to geometric obstructions such as cusps. 

 Second, for a Jordan domain $\Y$  with just a rectifiable boundary, any homeomorphism $\varphi \colon \partial \mathbb D \onto \partial \mathbb Y$ extends as a homeomorphism $h \colon \overline{ \mathbb D} \onto \overline{ \Y}$ in the Sobolev class $\W^{1,p} (\mathbb D, \mathbb R^2)$ for all $p<2$,~\cite{KOext1}. For $2 < p < \infty$ and a rectifiable target domain, it has been conjectured that the $p$-Douglas condition \eqref{eq:pdouglas} would also guarantee the existence of such an extension. However, we show that this conjecture is false.
 \begin{theorem}\label{thm:rectifiable_p>2}
Let $p\in (2, \infty)$. There is a Jordan domain $\Y$ with rectifiable boundary together with a homeomorphism $\varphi \colon \partial \mathbb D \onto \partial \Y$ that satisfies the $p$-Douglas condition but it does not admit a homeomorphic extension to $\mathbb D$ in $\W^{1,p} (\mathbb D, \R^2)$.
\end{theorem}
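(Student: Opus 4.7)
The plan is to derive Theorem~\ref{thm:rectifiable_p>2} directly from the main characterization theorem, Theorem~\ref{thm:main}: it suffices to exhibit a Jordan domain $\Y$ with rectifiable boundary together with a homeomorphism $\varphi \colon \partial \dd \onto \partial \Y$ that satisfies the $p$-Douglas condition~\eqref{eq:pdouglas} but violates the internal $p$-Douglas condition~\eqref{eq:internalDouglas}. By Theorem~\ref{thm:main} such a $\varphi$ cannot be extended to a homeomorphism in $\W^{1,p}(\dd,\C)$, whereas Gagliardo's theorem still supplies a continuous $\W^{1,p}$-extension.

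The geometric input is an inward cusp on $\partial \Y$: in suitable local coordinates $\partial \Y$ is modeled by $y = \pm x^\alpha$ with $\alpha > 1$ and cusp tip at the origin. For two points on opposite sides of such a cusp at tip-distance $c$, the Euclidean distance is $\sim c^\alpha$, while the internal distance in $\bar\Y$ must go around the tip and is $\sim c$, producing the pointwise integrand ratio $(d_\Y(\varphi(x),\varphi(y))/|\varphi(x)-\varphi(y)|)^p \sim c^{p(1-\alpha)} \to \infty$ as $c \to 0$. Rectifiability holds automatically for $\alpha > 1$ in the case of a single cusp, and for a cascade with cusp depths $\ell_n$ it follows from $\sum_n \ell_n < \infty$. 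The homeomorphism $\varphi$ is prescribed as arc-length on most of $\partial\Y$ and as a power-type reparametrization $s = t^\gamma$ on each cusp, where $s$ is the arc-length along $\partial\Y$ from the cusp tip and $t$ the corresponding coordinate on $\partial\dd$. A scaling analysis, splitting the integration region into a near-diagonal part (where $|s_1 - s_2| \lesssim s^\alpha$ and the Euclidean distance is governed by the cusp width $\sim s^\alpha$) and an off-diagonal part (where the Euclidean and internal distances are comparable), identifies the critical behavior of both integrals in terms of $\alpha$, $\gamma$, and $p$, and leaves room to choose the parameters so that the classical integral remains finite while the internal one diverges.

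The main obstacle is that for an isolated cusp with a symmetric power-law parametrization the off-diagonal contributions to both integrals scale identically in the cusp coordinate---since the two distances coincide to leading order when the boundary points lie at substantially different tip-distances---so the classical and internal conditions share the same critical exponent $\gamma_c = (p-2)/p$; the distinguishing near-diagonal contribution, divergent for the internal integral in a separate parameter window, is swamped by the common off-diagonal behavior. To force a genuine separation one must either employ an asymmetric parametrization with different exponents on the two sides of the cusp, or, more flexibly, use a countable cascade of cusps at shrinking scales $\ell_n$ with cusp parameters $\alpha_n$ and reparametrization exponents $\gamma_n$ tuned so that $\sum_n \ell_n < \infty$ (rectifiability) holds while the sum of the per-cusp internal contributions diverges and the sum of the classical ones still converges. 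Carrying out this delicate balance---and verifying that the local power-law reparametrizations piece together into a global homeomorphism $\partial\dd \onto \partial\Y$ that satisfies~\eqref{eq:pdouglas}---is the technical heart of the argument.
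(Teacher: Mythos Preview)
Your reduction is exactly right: the paper also derives the result from Theorem~\ref{thm:main} by exhibiting a rectifiable $\Y$ and a boundary homeomorphism for which the classical $p$-Douglas integral is finite while the internal one diverges. Where you diverge from the paper is in the geometry, and the divergence matters.

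You propose inward power-type cusps, and you yourself diagnose the obstruction: for a single cusp with a symmetric power parametrization, the off-diagonal contributions to the two integrals scale identically, so there is no separation. Your proposed fixes (asymmetric exponents, or a cascade of cusps with tuned parameters $\alpha_n,\gamma_n,\ell_n$) are not carried out, and the ``delicate balance'' you allude to is not obviously achievable---indeed it is not clear that a cascade of cusps can produce a genuine gap, since the same coupling of off-diagonal terms recurs at every scale.

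The paper avoids this difficulty by using a completely different boundary feature: a \emph{zigzag} rather than a cusp. The curve $\Gamma$ consists of line segments $J_n$ of length $\approx n^{-2}$ bouncing between two rays inside a cone, and $\partial\Y$ is a slight fattening of $\Gamma$. The parametrization sends intervals $I_n$ of length $\approx n^{-2(p-1)/(p-2)}$ linearly onto the $J_n$. The crucial point is that for $x\in I_n$, $y\in I_m$ with $m\gg n$, the Euclidean distance $|\varphi(x)-\varphi(y)|$ is bounded by the cone diameter at level $n$, i.e.\ $\lesssim n^{-2}$, because the zigzag keeps returning close to itself; whereas the internal distance is $\approx \sum_{k=n}^m k^{-2}\approx n^{-1}$. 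Thus the off-diagonal terms themselves already separate the two integrals by a factor of $n^p$, which is precisely what you could not extract from a cusp. The verification that the classical integral converges and the internal one diverges is then a direct (if somewhat lengthy) computation with these explicit power laws. Your plan, as written, does not supply a working construction; the zigzag idea is the missing ingredient.
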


To summarize the equivalences of the conditions~\eqref{eq:pdouglas} and~\eqref{eq:internalDouglas}:

 \vskip0.3cm
 \begin{center}
 
  \begin{tabular}{ |p{3.0cm}||p{1.8cm}|p{1.3cm}|p{1.8cm}|p{1.3cm}|  }
 \hline
$\partial \Y$& $1\le p <2$ & $p=2$ & $2<p<\infty$ & $p=\infty$\\
 \hline
Arbitrary     & No    &No&   No& Yes \\
Rectifiable  &   Yes   & No   & No&  Yes \\
Piecewise smooth& Yes   & No&Yes  &  Yes  \\
Lipschitz graph   &  Yes   &  Yes &    Yes  &  Yes  \\
\hline
\end{tabular}
 \end{center}
\vskip0.3cm
In particular, somewhat surprisingly, when $p > 2$ and $\Y $ has a piecewise smooth boundary, a homeomorphism $\varphi \colon \partial \mathbb{D} \onto \partial \Y $ satisfies the $p$-Douglas condition if and only if it satisfies the internal $p$-Douglas condition. This equivalence holds even though the internal distances in $\Y$ may not be comparable to their Euclidean distances, see Remark~\ref{rm:fail_proof}.

In contrast, for $p = 2$, this equivalence breaks down, even for an inner cusp domain with any power-type cusp on the boundary~\cite{KOext3}. In this case, one may reestablish the equivalence by assuming also that the target domain is quasiconvex. Recall that a Jordan domain $\yy $ is called \emph{quasiconvex} if there exists a constant $C$ such that, for every pair of points $x, y \in \bar{\yy}$, the internal distance satisfies  
\[
d_{\yy}(x, y) \leq C |x - y|.
\]  
Quasiconvexity plays a fundamental role in Geometric Function Theory (GFT)~\cite{GeMo, Ge2, NV, Va1.5}. In particular, an important subclass of quasiconvex domains is formed by \emph{quasidisks}, which arise as images of the unit disk under quasiconformal self-maps of $\mathbb{C}$. The boundary of a quasidisk need not be rectifiable.

In the case of a quasidisk target, it is already known that the $p$-Douglas condition is sufficient to guarantee a $\W^{1,p}$-Sobolev homeomorphic extension~\cite{KKO, KOext3}. However, this fact also follows directly from the following more general result, which is an immediate consequence of Theorem~\ref{thm:main}.  

%
 
%

\begin{theorem}\label{thm:quasiconvex}  
Let $\yy $ be a quasiconvex Jordan domain, and let $\varphi \colon \partial \dd \onto \partial \yy $ be a homeomorphic boundary map. If $\varphi $ satisfies the $ p$-Douglas condition \eqref{eq:pdouglas}, then it admits a homeomorphic extension $ h \in \W^{1,p}(\dd, \R^2)$.  
\end{theorem}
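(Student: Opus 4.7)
The plan is to observe that this theorem is essentially a direct corollary of Theorem~\ref{thm:main}, once one translates the $p$-Douglas condition~\eqref{eq:pdouglas} into the internal $p$-Douglas condition~\eqref{eq:internalDouglas} by exploiting quasiconvexity. The entire content of the argument is in the pointwise bound $d_{\Y}(a,b) \leq C|a-b|$ available for all $a,b\in\bar{\Y}$, which is exactly the defining property of a quasiconvex Jordan domain.

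Concretely, I would first fix the quasiconvexity constant $C$ for $\Y$ so that for every pair $a,b\in\bar{\Y}$ we have $d_{\Y}(a,b)\leq C|a-b|$. Applied with $a=\varphi(x)$ and $b=\varphi(y)$ for $x,y\in\partial\dd$, raising to the $p$th power and dividing by $|x-y|^p$, this gives the pointwise inequality
\[
\frac{[d_{\Y}(\varphi(x),\varphi(y))]^p}{|x-y|^p} \ \leq\ C^p\,\frac{|\varphi(x)-\varphi(y)|^p}{|x-y|^p}
\]
on $\partial\dd\times\partial\dd$. Integrating and invoking the hypothesis that $\varphi$ satisfies the classical $p$-Douglas condition~\eqref{eq:pdouglas} immediately yields
\[
\int_{\partial\dd}\int_{\partial\dd}\frac{[d_{\Y}(\varphi(x),\varphi(y))]^p}{|x-y|^p}\,dx\,dy\ \leq\ C^p\int_{\partial\dd}\int_{\partial\dd}\frac{|\varphi(x)-\varphi(y)|^p}{|x-y|^p}\,dx\,dy\ <\ \infty,
\]
so $\varphi$ satisfies the internal $p$-Douglas condition~\eqref{eq:internalDouglas}.

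At this point Theorem~\ref{thm:main} applies and supplies the desired homeomorphic extension $h\colon\bar{\dd}\onto\bar{\Y}$ belonging to $\W^{1,p}(\dd,\R^2)$. There is really no obstacle to overcome here; all the difficulty has been absorbed into Theorem~\ref{thm:main}, whose sufficiency direction does the heavy lifting of producing the Sobolev homeomorphism from the internal Douglas integrability. The present statement should therefore be recorded as a short corollary emphasizing that quasiconvexity is precisely the geometric hypothesis on the target that permits replacing internal distances with Euclidean ones in the Douglas integral without loss.
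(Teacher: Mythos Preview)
Your proposal is correct and matches the paper's treatment exactly: the paper states that Theorem~\ref{thm:quasiconvex} is an immediate consequence of Theorem~\ref{thm:main}, and your argument spells out precisely why, using the quasiconvexity inequality $d_{\yy}(a,b)\le C|a-b|$ to pass from~\eqref{eq:pdouglas} to~\eqref{eq:internalDouglas}. There is nothing to add.
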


\textbf{Outline of the paper.} The proof of our main result, Theorem \ref{thm:main}, is divided into two sections covering the sufficiency and necessity of the condition \eqref{eq:internalDouglas}. The sufficiency part is based on a new geometric extension method, which utilizes a novel framework of constructing a dyadic system of hyperbolic geodesics within the target domain, giving a decomposition of the target via piecewise smooth regions not touching the boundary. The construction allows for increased flexibility compared to previous extension methods, and enables us to forego any additional geometric assumptions such as those required in prior works around this problem \cite{KKO,KOext1,KOext3}.  

Proving that the internal $p$-Douglas condition~\eqref{eq:internalDouglas} is necessary for the existence of a homeomorphic extension also presents significant challenges. Our proof draws heavily on the spherical maximal inequality, as developed by Bourgain and Stein in~\cite{Bo, St_sp}. To the best of our knowledge, this represents the first successful application of spherical maximal functions to trace problems. The proof is then completed using the classical theory of the associated maximal operators.

\section{Proof of Theorem \ref{thm:main}, sufficiency}\label{sec:sufficiency}
In this section, we prove the sufficiency part of our main result, Theorem \ref{thm:main}. In fact, the proof that follows also extends to the case $p = 1$, showing that condition \eqref{eq:internalDouglas} is sufficient to guarantee the existence of a homeomorphic $\W^{1,p}$-extension for all $p \geq 1$. 

\begin{proof} Suppose that a boundary homeomorphism $\varphi : \partial \dd \to \partial \yy$ satisfies \eqref{eq:internalDouglas}. The strategy will be to find appropriate dyadic decompositions on both the domain and target side and use them to define a homeomorphic Sobolev extension $h : \dd \to \yy$ piece by piece.

In this and the later parts of the paper, we will employ a fixed dyadic decomposition of the unit circle $\partial \dd$ into dyadic arcs. These dyadic arcs will always be denoted by $\{I_{n,k}\}$, with $n = 0,1,\ldots$ and $k = 1,\ldots,2^n$, with each arc $I_{n,k}$ having length comparable to $2^{-n}$ respectively. 

At some parts of the proof, for the ease of presentation, we will opt to pretend that the boundary $\partial \dd$ is locally flat, which is why the dyadic arcs $I_{n,k}$ will simply be referred to as dyadic intervals. If needed, this convention can be made rigorous by considering a local bilipschitz change of variables into the upper half space. This convention is particularly reflected in how we refer to orientation, choosing to call the endpoints of each $I_{n,k}$ the left and right endpoint for example. We may also suppose that the boundary map $\varphi : \partial \dd \to \partial \yy$ is positively oriented. 

We would next like to construct certain crosscuts inside the target domain $\yy$, but before we can do so we need to consider certain subintervals of the $I_{n,k}$. We first divide each $I_{n,k}$ into eight intervals of equal length, denoted by $I^j_{n,k}$, $j = 1,\ldots,8$ from left to right. We call these the pieces of $I_{n,k}$. For each dyadic interval $I_{n,k}$, we now wish to define a specific crosscut $C_{n,k}$ in $\yy$ corresponding to it. The construction will be slightly different in the case where $n$ is an even number versus when $n$ is odd, so let us explain the case where $n$ is even first.

Let us pick $A_{n,k} = I^1_{n,k}$ as the leftmost piece of $I_{n,k}$. Moreover, we pick another interval $B_{n,k} = I^2_{n,k+1}$ as the second piece of the neighbouring dyadic interval $I_{n,k+1}$.

Let us now consider the integral expressions
\begin{equation}\label{eq:oneterm}
\I_{n,k} = \int_{A_{n,k}}\int_{B_{n,k}} \frac{d_{\yy}(\varphi(x),\varphi(y))^p}{|x-y|^p} \, dx \, dy.
\end{equation}
The key thing to note about the $\I_{n,k}$ is that the domains of integration $A_{n,k} \times B_{n,k}$ are in fact mutually disjoint subsets of $\partial \dd \times \partial \dd$ for each distinct pair $(n,k)$. Hence the sum $\sum_{n=0}^\infty \sum_{k=1}^{2^n} \I_{n,k}$ converges by \eqref{eq:internalDouglas}.

The next thing we note is that the expression $|x-y|^{-p}$ within this domain of integration is comparable to $(2^{-n})^{-p}$, which will help with the following estimate. The estimate is that by a pointwise inequality, there must be at least one pair of points $a_{n,k} \in A_{n,k}$ and $b_{n,k} \in B_{n,k}$ such that
\[d_{\yy}(\varphi(a_{n,k}),\varphi(b_{n,k}))^p (2^{-n})^{2-p} \leq c_p \I_{n,k},\]
where $C_p$ only depends on $p$. We may now define the crosscut $C_{n,k}$ in $\yy$ as the hyperbolic geodesic between the boundary points $\varphi(a_{n,k})$ and $\varphi(b_{n,k})$. The Gehring-Hayman theorem says that the length of this curve is comparable to the internal distance between its endpoints, which lets us obtain the following key estimate
\begin{equation}\label{eq:partEstimate}
\left(2^{-n}\right)^{2-p} |C_{n,k}|^p  \leq c_p \I_{n,k}.
\end{equation}
Eventually, we will split the domain side into dyadic regions $U_{n,k}$ which lie ``above" their corresponding interval $I_{n,k}$, and if we can guarantee that the extension $h$ maps each such region to a subset $V_{n,k}$ of $\yy$ with perimeter essentially comparable to $|C_{n,k}|$, then with \eqref{eq:partEstimate} we will be able to show that $h$ lies in the correct Sobolev class. The obstruction now is purely topological, as we will need to use the crosscuts $C_{n,k}$ to decompose the target side $\yy$ and construct the dyadic regions $V_{n,k}$ which will let us define the extension $h$.

Regarding the structure of the proof, we will start by constructing the extension near the boundary $\partial \dd$. This simply amounts to disregarding what happens at the dyadic levels for small values of $n$. From now on, let us assume that $n \geq 4$ so that at the initial value $n = 4$ there are already $16$ crosscuts $C_{n,k}$. At the end of the proof, we will define the extension in the remaining ``central region" that will be left over.

\textbf{Crossing of curves.} Let us begin by looking at some topological properties of the curves $C_{n,k}$. We are particularly interested in characterizing the instances in which these curves intersect inside of $\yy$. Any conformal map $g : \dd \to \yy$ preserves hyperbolic geodesics, so we simply need to consider when two hyperbolic geodesics intersect in $\dd$. In this case the geodesics are circular arcs orthogonal to the boundary, so the answer simply depends on the ordering of their endpoints with respect to the orientation of the boundary, see Figure \ref{fig:QHdisc}. What this means for us is that two of our crosscuts $C_{n,k}$ and $C_{n',k'}$ will intersect if and only if their endpoints are in alternating order on $\partial \yy$, and in this case the curves intersect at exactly one point.

\begin{figure}
\centering
\includegraphics[scale=0.25]{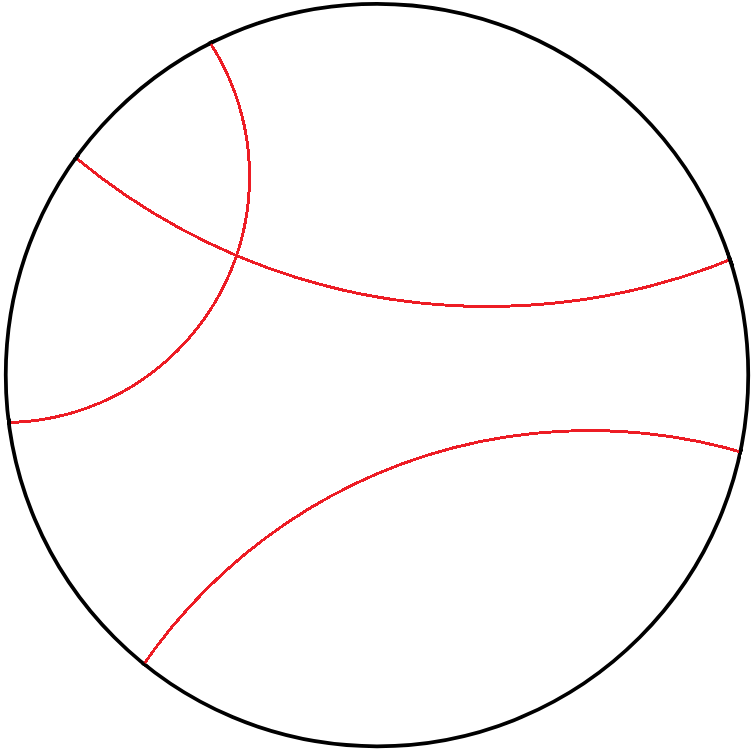}
\caption{The intersection of quasihyperbolic geodesics in $\dd$ is based on the order of endpoints.}
\label{fig:QHdisc}
\end{figure}

Note now that for each $n,k$ the curves $C_{n,k-1}$ and $C_{n,k}$ will necessarily intersect each other since their endpoints are positioned alternatingly (the order is $\varphi(a_{n,k-1}),\varphi(a_{n,k}),\varphi(b_{n,k-1}),\varphi(b_{n,k})$). We wish to get rid of the curves $C_{n,k}$ in order to work with a set of curves with less intersection points. For this purpose, let $\Gamma_{n,k}$ denote the hyperbolic geodesic from $\varphi(a_{n,k})$ to $\varphi(a_{n,k+1})$. We claim that
\begin{equation}\label{eq:newcurve}
|\Gamma_{n,k}| \leq C(|C_{n,k}| + |C_{n,k+1}|), 
\end{equation}
where $C$ is universal. This is simply due to the fact that one may travel from $\varphi(a_{n,k})$ to $\varphi(a_{n,k+1})$ going along the curves $C_{n,k}$ and $C_{n,k+1}$ respectively, which establishes that the internal distance from $\varphi(a_{n,k})$ to $\varphi(a_{n,k+1})$ is controlled by the lengths of these two curves. The rest is another application of the Gehring-Hayman theorem, since $\Gamma_{n,k}$ is also a hyperbolic geodesic.

\begin{figure}
\centering
\includegraphics[scale=0.25]{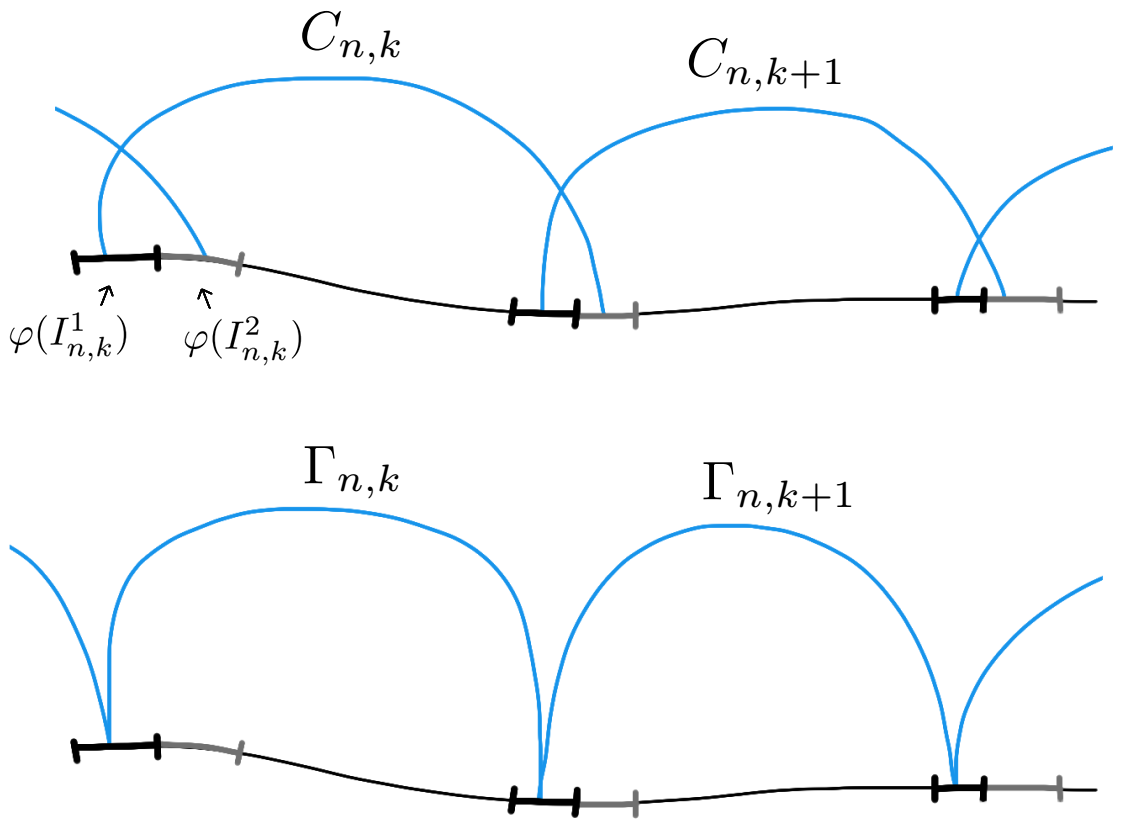}
\caption{Replacing the curves $C_{n,k}$ with the curves $\Gamma_{n,k}$.}
\label{fig:startcurves}
\end{figure}

Via this replacement, we have obtained new crosscuts $\Gamma_{n,k}$ which now have endpoints $\varphi(a_{n,k})$ to $\varphi(a_{n,k+1})$ instead. The advantage gained is that for a fixed $n$ the $\Gamma_{n,k}$ do not cross each other, and only share an endpoint with each of their respective neighbours. Combining \eqref{eq:newcurve} and \eqref{eq:partEstimate} gives
\begin{equation}\label{eq:newcurveT}
\left(2^{-n}\right)^{2-p} |\Gamma_{n,k}|^p \leq c_p(\I_{n,k} + \I_{n,k+1}).
\end{equation}

Unfortunately, we must further complicate this construction by adding another analogous set of curves $\Gamma^{*}_{n,k} \subset \yy$ on the same dyadic level. The curves $\Gamma^{*}_{n,k}$ will be defined in exactly the same way as the curves $\Gamma_{n,k}$, but using different starting intervals in place of $A_{n,k}$ and $B_{n,k}$. The intervals we will use instead are $A^{*}_{n,k} = I^5_{n,k}$ and $B^*_{n,k} = I^6_{n,k+1}$, so we essentially just shift the construction by half a length of $I_{n,k}$ to the right. With similar reasoning as was used to obtain \eqref{eq:newcurveT}, we may obtain the analogous estimate
\begin{equation}\label{eq:newcurveT1}
\left(2^{-n}\right)^{2-p} |\Gamma_{n,k}^*|^p \leq c_p(\I_{n,k+1} + \I_{n,k+2}).
\end{equation}

\begin{figure}
\centering
\makebox[\textwidth][c]{%
    \includegraphics[scale=0.26]{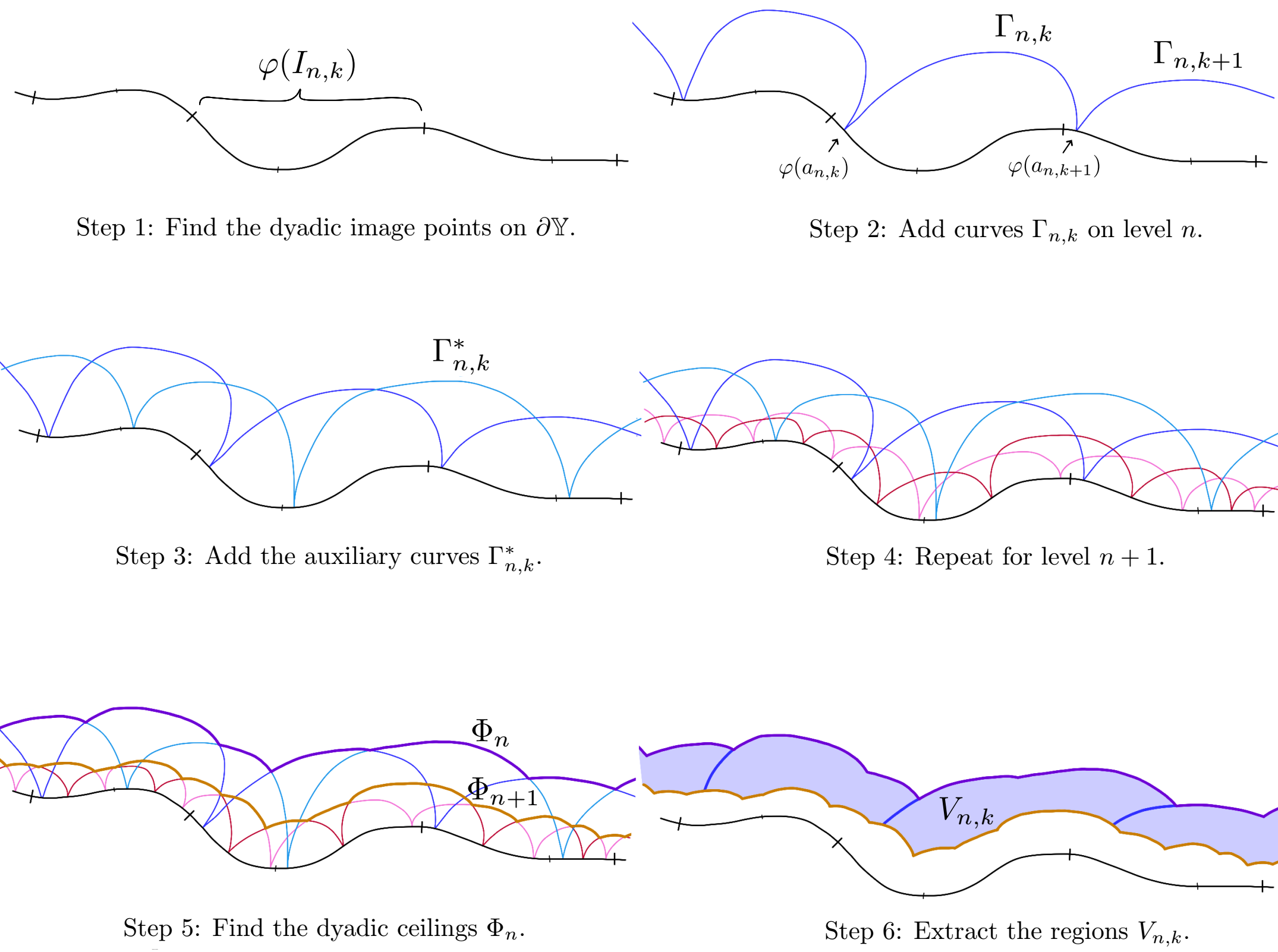}
}
\caption{A step-by-step view of the entire construction on the image side.}
\label{fig:construction}
\end{figure}

We can then repeat the construction as before and find curves $\Gamma^{*}_{n,k}$ starting from a point $\varphi(a^*_{n,k})$ and ending up at the point $\varphi(a^*_{n,k+1})$, where $a^*_{n,k}$ is a point on $I^3_{n,k}$ for each $k$. See Step 2 and 3 in Figure \ref{fig:construction}. 

\textbf{The case when $n$ is odd.} We next explain how this construction changes when $n$ is odd. The purpose of this change is simply to control the amount of intersection points between successive generations of the crosscuts $\Gamma_{n,k}$ and $\Gamma^{*}_{n,k}$. To achieve this, we have to take a mirror image of the construction from the case where $n$ was even. What we mean by this is that in the case where $n$ is odd we will define the initial crosscuts $C_{n,k}$ using the piece intervals $A_{n,k} = I^8_{n,k}$ and $B_{n,k} = I^7_{n,k-1}$ instead of what we chose previously. Note that the order of the intervals is flipped compared to the case where $n$ was even, but otherwise the construction may proceed as before.

For $n$ odd, this then lets us define the curves $\Gamma_{n,k}$ as curves in $\yy$ between $\varphi(a_{n,k})$ and $\varphi(a_{n,k+1})$, where $a_{n,k}$ is always a point on $I^8_{n,k-1}$. So in a practical sense the only thing which has changed from the case of even $n$ is that now the points $a_{n,k}$ are on the left hand side of the left endpoint of $I_{n,k}$ instead of on the right hand side.

The curves $\Gamma^{*}_{n,k}$ for $n$ odd are similarly defined as curves between $\varphi(a^*_{n,k})$ and $\varphi(a^*_{n,k+1})$, where $a^*_{n,k}$ is a point on $I^4_{n,k}$ for each $k$.

Let us now study the intersection points between curves on the same and successive generations. On a fixed generation $n$, the curves $\Gamma_{n,k}$ and $\Gamma^{*}_{n,k}$ must intersect once due to the ordering of their endpoints. Similarly, the curves $\Gamma^{*}_{n,k}$ and $\Gamma_{n,k+1}$ must also intersect once. Since $\Gamma_{n,k}$ and $\Gamma_{n,k+1}$ do not intersect, their two intersection points with $\Gamma^{*}_{n,k}$ must be in order of $\Gamma_{n,k}$ first, $\Gamma_{n,k+1}$ last, when traversing $\Gamma^{*}_{n,k}$ from $\varphi(a^*_{n,k})$ to $\varphi(a^*_{n,k+1})$. The same goes for how $\Gamma_{n,k}$ intersects the two curves $\Gamma^{*}_{n,k-1}$ and $\Gamma^{*}_{n,k}$. So the intersection points are as depicted in Step 3 in Figure \ref{fig:construction}. 

We now note that on the next dyadic level $n+1$, for each crosscut of the form $\Gamma_{n+1,k}$ or $\Gamma^{*}_{n+1,k}$ there is a curve on the previous level, either $\Gamma_{n,j}$ or $\Gamma^{*}_{n,j}$ for an appropriate $j$, such that the curve picked on level $n+1$ does not intersect the chosen curve on level $n$. This is due to the ordering of the endpoints, as our construction ensures that the two endpoints of each crosscut always lie between the endpoints of a curve on the previous level, see Step 4 in Figure \ref{fig:construction}. We call this the \emph{successive non-crossing property}, as we will need to refer to it later.

\textbf{Dyadic ceilings.} Let us next define, for each dyadic level $n$, a Jordan curve $\Phi_n$ which lies inside of $\yy$. This curve is defined as follows. We take any curve $\Gamma_{n,k}$, and starting from its left endpoint $\varphi(a_{n,k})$ we follow along the curve until we hit its intersection point with $\Gamma^*_{n,k}$. Take this as a starting point of $\Phi_n$. We follow along $\Gamma^*_{n,k}$ with the same left to right orientation until we hit its intersection point with $\Gamma_{n,k+1}$.

In this way we alternatingly travel between the two sets of curves on the same dyadic level $n$, and eventually come back to the starting point on $\Gamma_{n,k}$. This closes off a Jordan curve inside $\yy$ defined as $\Phi_n$, see Step 5 in Figure \ref{fig:construction}. The curve $\Phi_n$ can be regarded as a ``dyadic ceiling" at level $n$. An alternative way to define $\Phi_n$ is simply to use the crosscuts $\Gamma_{n,k}$ and $\Gamma_{n,k}^*$ (with fixed $n$ and each $k$) to cut off parts from the domain $\yy$, and then defining $\Phi_n$ as the boundary of the region which is left. 
\\\\
\textbf{Claim.} The curves $\Phi_n$ and $\Phi_{n+1}$ do not intersect.

\emph{Proof of claim.} This is a direct consequence of the successive non-crossing property, as this property guarantees that no curve of the form $\Gamma_{n+1,k}$ or $\Gamma_{n+1,k}^*$ intersects the curve $\Phi_n$. In simple terms, this means that the dyadic ceiling at level $n$ is always ``higher" than the one at level $n+1$.
\\\\
Let us next use the curves we are working with to construct, for each $n$ and $k$, a region $V_{n,k} \subset \yy$ inside the target domain. These will provide a way to split the target domain dyadically into pieces.

The region $V_{n,k}$ will be bounded by a Jordan curve consisting of four parts. Two of these parts will be subcurves of $\Phi_n$ and $\Phi_{n+1}$, so it remains to pick the remaining two parts which will naturally be curves connecting these two dyadic ceilings. The two curves we pick for this purpose will simply be appropriate subcurves of $\Gamma_{n,k}$ and $\Gamma_{n,k+1}$. This restriction already uniquely defines the region $V_{n,k}$, but we give an explicit explanation as well.

Start from the endpoint $\varphi(a_{n,k})$ and follow along $\Gamma_{n,k}$ until we reach a point on $\Phi_{n+1}$ which we call $v_{n,k}^-$. This is where the boundary curve $\partial V_{n,k}$ starts from. Continue along $\Gamma_{n,k}$ until we reach a point on $\Phi_n$, which we call $v_{n,k}^+$. We continue from $v_{n,k}^+$ by following along $\Phi_n$ until we hit the point $v_{n,k+1}^+$, which is a point defined analogously to $v_{n,k}^+$ but using $\Gamma_{n,k+1}$ instead of $\Gamma_{n,k}$. We follow backwards from $v_{n,k+1}^+$, retracing the path that was taken from $\varphi(a_{n,k+1})$ to $v_{n,k+1}^+$ along $\Gamma_{n,k+1}$. We eventually reach $\Phi_{n+1}$ at a point we call $v_{n,k+1}^-$, and then follow backwards along $\Phi_{n+1}$ until we reach the initial starting point $v_{n,k}^-$. This gives a closed Jordan curve which then bounds the region $V_{n,k}$. See Step 6 in Figure \ref{fig:construction}, and Figure \ref{fig:horrible}.

\begin{figure}
\centering
\includegraphics[scale=0.29]{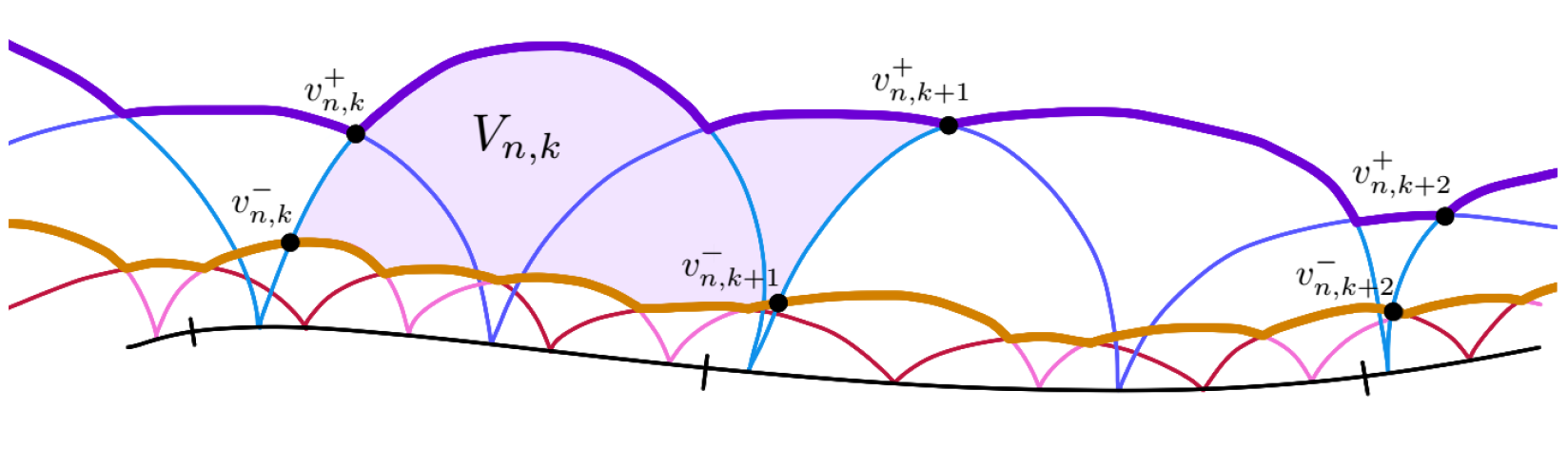}
\caption{The construction of the regions $V_{n,k}$ on the target side.}
\label{fig:horrible}
\end{figure}

Let us next define, for each $V_{n,k}$, a corresponding preimage region $U_{n,k} \subset \dd$ on the domain side. We start by defining, for each dyadic level $n$, the curve $\phi_n$ as the circle $\{(1-2^{-n}) e^{i\theta} : \theta \in [0,2\pi)\}$. The boundary of the region $U_{n,k}$ will again consist of four parts, two of which are subcurves of $\phi_n$ and $\phi_{n+1}$. In fact, we will pick two points on $\phi_n$, two points on $\phi_{n+1}$, and then connect these pairwise via linear segments to form $U_{n,k}$ as a sort of quadrilateral with two sides being circular arcs.

For presentation purposes, we again imagine the boundary $\partial \dd$ as being locally flat. In this case the curve $\phi_n$ can be interpreted as simply a horizontal line at height $2^{-n}$ from the boundary (though it is actually a circle). We now wish to choose the four vertices of the quadrilateral $U_{n,k}$. The only complication in this part of the construction is to respect the topology of the image regions, mainly in how the regions $V_{n,k}$ are aligned between successive levels. We elaborate on this a bit.

Let us consider a single target side region $V_{n,k}$, and we call the \emph{bottom side} of this region the part of $\partial V_{n,k}$ which lies on $\Phi_{n+1}$. The main concern here is how the top vertices of the sets $V_{n+1,j}$ on the next dyadic level are positioned, and by top vertices we mean the appropriate points $v^+_{n+1,j}$. There will  be anywhere from one to three of such top vertices positioned on the bottom side of a fixed region $V_{n,k}$. In fact, there are three curves $\Gamma_{n+1,j}$ which have at least one endpoint between the endpoints of $\Gamma_{n,k}$. These give rise to three top vertices on level $n+1$ which have the potential to be on this bottom side. The one in the middle is guaranteed to be on the bottom side due to topology (see e.g. Figure \ref{fig:horrible}), and we call such a situation a \emph{top point in the middle}. The two others can be either on the bottom side of $V_{n,k}$ or either of its neighbours $V_{n,k-1}$ and $V_{n,k+1}$ respectively. Hence for every vertex $v_{n,k}^-$ on the bottom side on level $n$, there is one particular top vertex on level $n+1$ which can a priori be on either side of this point on the curve $\Phi_{n+1}$. It could also be equal to $v_{n,k}^-$. Let's call these three options \emph{left, right,} and \emph{equal}. Whichever option here happens will not pose a problem for the construction, but it will have to be respected in the construction of the quadrilaterals $U_{n,k}$.

We now define each quadrilateral $U_{n,k}$ by picking its four vertices. The two top ones will be on $\phi_n$, and the two bottom ones will be on $\phi_{n+1}$. Consult Figure \ref{fig:domainside} for a visual of this construction. In fact, we pick the bottom vertices to be the points on $\phi_{n+1}$ whose projections to the boundary on the domain side are equal to the endpoints of the interval $I_{n,k}$. If the leftmost one is called $u_{n,k}^-$, then the rightmost one is naturally $u_{n,k+1}^-$. Let us next explain how to pick the leftmost top vertex $u_{n,k}^+$ on $\phi_n$, and the rightmost one $u_{n,k+1}^+$ will be done by repeating the same construction but for $k+1$ in place of $k$. The choice of $u_{n,k}^+$ will depend on the situation of the top vertex $v_{n,k}^+$ on the target side. There are some cases here.
\begin{itemize}
\item First case, if $v_{n,k}^+$ was a \emph{top point in the middle}. In this case we may simply pick $u_{n,k}^+$ to be the point with the same projection to the boundary as $u_{n,k}^-$, i.e. situated directly above it on $\phi_n$.
\item Second case, here we must refer to how $v_{n,k}^+$ is positioned with respect to the corresponding bottom vertex of a set $V_{n-1,j}$ on the previous dyadic level (the options \emph{left, right,} and \emph{equal} mentioned before). If $v_{n,k}^+$ is equal to this bottom vertex $v_{n-1,j}^-$, then we may define $u_{n,k}^+$ as in the first case. In the other two cases, we proceed as in the first case but at the end we shift the position of $u_{n,k}^+$ on $\phi_n$ either to the left or the right (depending on case) by a fixed amount, say $1/8$ of the length of $I_{n,k}$.
\item Initial case. If $n = 4$, which we chose as our starting index for $n$, then there is no previous dyadic level to be considered and we can define $u_{n,k}^+$ as in the first case, i.e. directly above the left endpoint of $I_{n,k}$.
\end{itemize}
To reiterate, the main point of making the choice in the second case is simply to ensure that the points $u_{n,k}^+$ and $u_{n-1,j}^-$ are in the same order on the curve $\phi_n$ as their prospective image points $v_{n,k}^+$ and $v_{n-1,j}^-$ are on $\Phi_n$, which is a natural requirement if we are constructing a homeomorphism.

\begin{figure}
\centering
\includegraphics[scale=0.3]{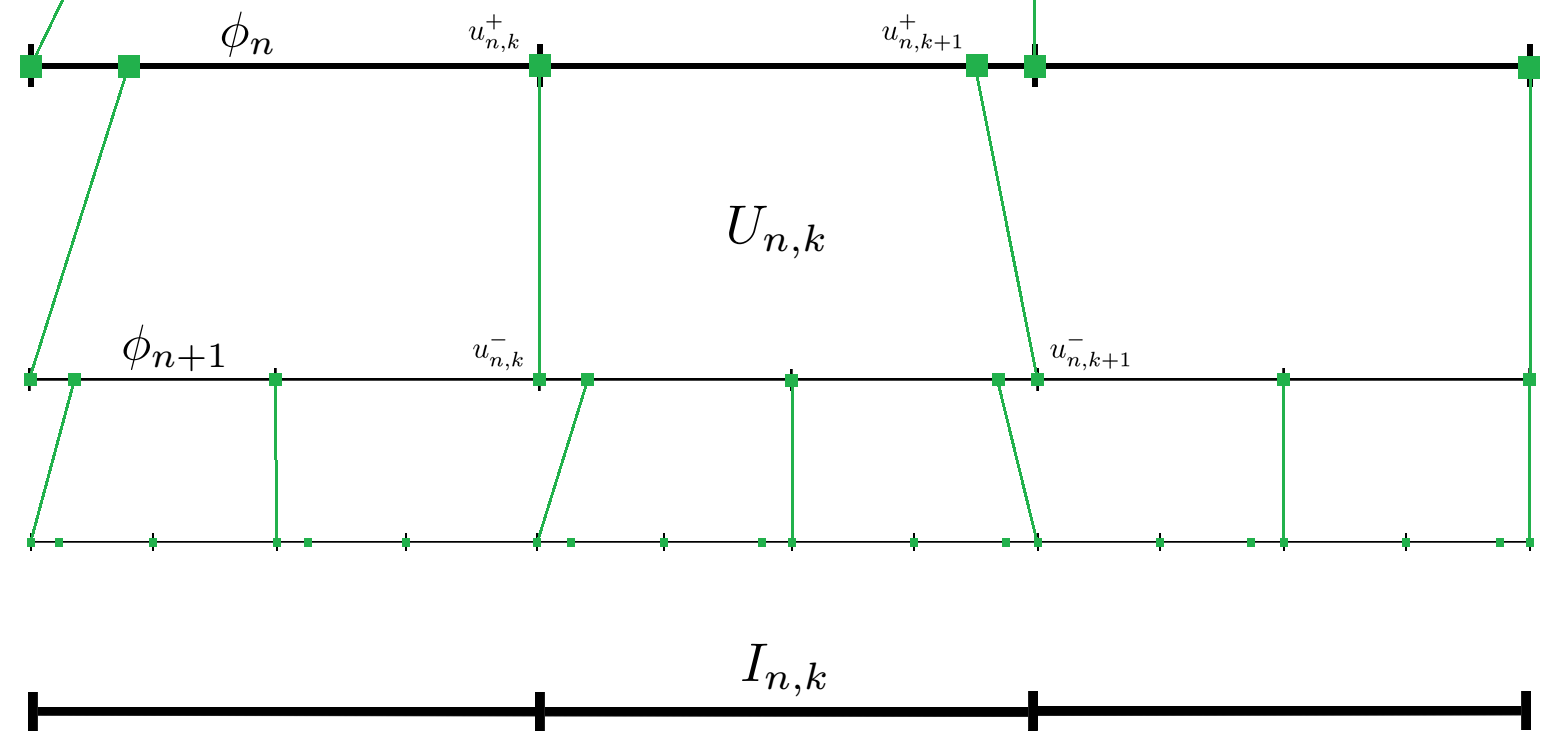}
\caption{The construction of the regions $U_{n,k}$ on the domain side. The boundary here is imagined to be locally flat to simplify the illustration.}
\label{fig:domainside}
\end{figure}

To finish, the quadrilateral $U_{n,k}$ is now defined by its endpoints $u_{n,k}^+$, $u_{n,k}^-$, $u_{n,k+1}^-$, and $u_{n,k+1}^+$. We further note that
\begin{itemize}
\item The quadrilaterals $U_{n,k}$ form a partition of the domain side which is topologically equivalent to the partion of the target side given by the $V_{n,k}$. 
\item The pairwise differences between nearby points are always comparable to $2^{-n}$, where $n$ denotes the dyadic level. This is particularly important for considering differences between bottom points $u_{n,k}^-$ and nearby top points $u_{n+1,j}^+$ of the next dyadic level. The idea here is that by controlling this distance from below, we guarantee that no small part is mapped to a long curve in the target.
\end{itemize}
We may now define the extension map $h : \dd \to \yy$ within the regions $U_{n,k}$. To do this, we simply need to define the map from each of the regions $U_{n,k}$ to the corresponding region $V_{n,k}$ on the target side. This should be done in such a way that the boundary maps between neighbouring regions agree. But as long as this is satisfied and the map $h$ is homeomorphic from $U_{n,k}$ to $V_{n,k}$ for each $n,k$, it will automatically be a homeomorphism and also extend continuously up to the boundary and be equal to $\varphi$ there.

We start by defining the map $h$ on only the collection of all points
\[\mathcal{U} = \{u_{n,k}^+ : n \geq 1, 1 \leq k \leq 2^n\} \cup \{u_{n,k}^- : n \geq 1, 1 \leq k \leq 2^n\}.\] Naturally, for each $n,k$ we will define the map $h$ to send each point $u_{n,k}^+$ to $v_{n,k}^+$ and correspondingly $u_{n,k}^-$ to $v_{n,k}^-$. We next move on to defining $h$ from $\partial U_{n,k}$ to $\partial V_{n,k}$ as a piecewise constant speed map.

The collection $\mathcal{U}$ of points in $\dd$ naturally splits each boundary $\partial U_{n,k}$ into a number segments, the quantity of which will range from four to nine depending on the order of points on $\phi_n$ and $\phi_{n+1}$. By definition $h$ will map the endpoints of such a segment into image points on $\partial V_{n,k}$, and we simply define $h$ on the whole of such a segment as a constant speed map from this segment on to the corresponding part of $\partial V_{n,k}$ between the two image points.

On each fixed quadrilateral $U_{n,k}$, this defines $h$ on the boundary $\partial U_{n,k}$ as a Lipschitz-homeomorphism onto $\partial V_{n,k}$, with Lipschitz-constant controlled by a constant times $|\partial V_{n,k}|/2^{-n}$. We may now use a homeomorphic Lipschitz extension, see e.g. \cite{Kovalev}, to extend this boundary map as a homeomorphic Lipschitz-map from $U_{n,k}$ to $V_{n,k}$ with comparable Lipschitz constant to the boundary map. This defines $h$ within the union of all of the regions $U_{n,k}$.

\begin{figure}[h]\centering
\includegraphics[scale=0.35]{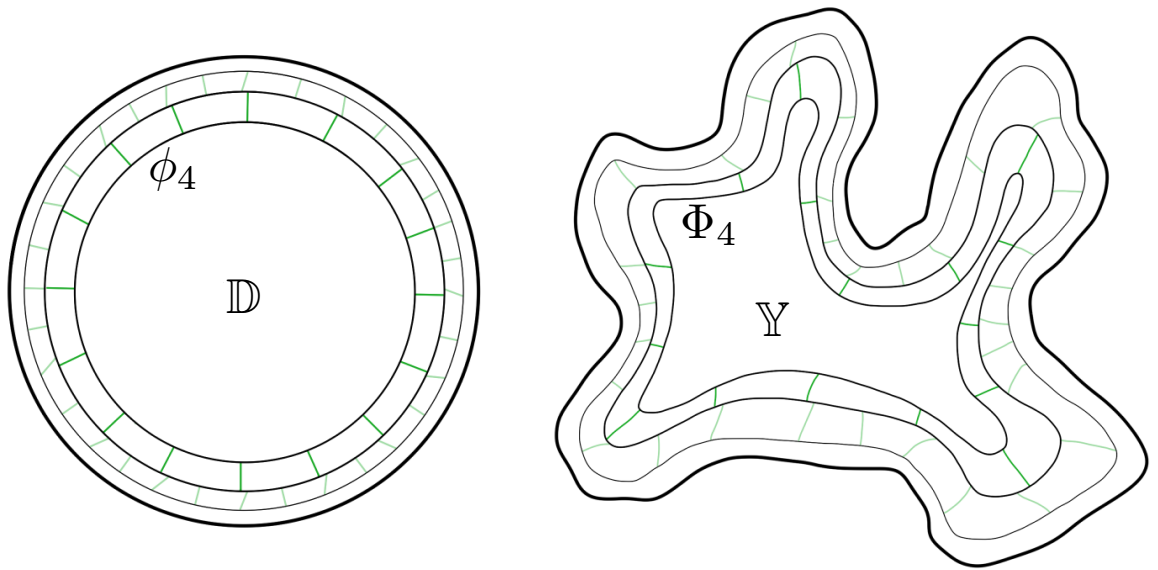}
\caption{A full view of the initial stage of the construction.}\label{fig:fullimg}
\end{figure}

It remains to define $h$ on the central part which is remaining, which is the region bounded by the circle $\phi_4$. Since $h$ has already been defined as a Lipschitz-homeomorphism from the curve $\phi_4$ to $\Phi_4$, we may again apply a homeomorphic Lipschitz extension to define $h$ between the central regions in $\dd$ and $\yy$ bounded by these two curves. See Figure \ref{fig:fullimg}.

Let us conclude by showing that $h$ lies in $\W^{1,p}(\dd)$. On each $U_{n,k}$, the Lipschitz-constant of $h$ will be linearly bounded by the Lipschitz-constant on the boundary $\partial U_{n,k}$. Each segment of such a boundary on which $h$ was defined in constant speed was comparable in length to $2^{-n}$, and the image of such a segment will have length at most a constant times the length of all involved curves on the target side, which are $\Gamma_{n,k}$, $\Gamma^{*}_{n,k}$ and three more similar curves on the dyadic level $n+1$. Let us denote the total length of these five curves by $L_{n,k}$. Thus the Lipschitz-constant of $h$ in $U_{n,k}$ is at most a constant times $L_{n,k}/2^{-n}$, and this yields the Sobolev-estimate
\[\int_{U_{n,k}} |Dh(z)|^p \, dz \leq c_p|L_{n,k}|^p (2^{-n})^{2-p}.\]
Recall now \eqref{eq:partEstimate} and \eqref{eq:newcurve}. Adding up over $n,k$, we may use these estimates along with the fact that the quantity $L_{n,k}$ only included the lengths of five curves close-by to get the desired estimate
\[\int_{\bigcup_{n,k} U_{n,k}} |Dh(z)|^p \, dz \, \leq 5c_p \sum_{n,k} \I_{n,k} \leq \, c_p  \int_{\partial\dd}\int_{\partial\dd} \frac{d_{\yy}(\varphi(x),\varphi(y))^p}{|x-y|^p} \, dx \, dy.\]
In the remaining central region bounded by $\phi_4$, we obtain a similar estimate as the Lipschitz-constant of $h$ in this region is bounded by a constant times the length of $\Phi_4$, which is bounded by the total length of all the curves $\Gamma_{4,k}$ and $\Gamma_{4,k}^*$.

This finishes the construction, as $h$ is now verified to be a homeomorphic $\W^{1,p}$-extension of $\varphi$.
\end{proof}

\section{Maximal operators}

Let \( u \colon \mathbb{R}^n \to \mathbb{R} \) be a locally integrable function. The \emph{Hardy–Littlewood maximal function} of   $u$ is defined by  
\[
\mathbf{M}u(x) = \sup_{r > 0} \dashint_{\mathbb{B}(x, r)} \lvert u(z) \rvert \, \mathrm{d}z \,,
\]
where the integral average is denoted by  $
\dashint_{\mathbb{B}(x, r)} \lvert u \rvert = \lvert \mathbb{B}(x, r) \rvert^{-1} \int_{\mathbb{B}(x, r)} \lvert u \rvert \, \mathrm{d}z  $.
Here, \( \mathbb{B}(x, r) \) is the ball centered at \( x \) with radius \( r > 0 \), and \( \lvert \mathbb{B}(x, r) \rvert \) represents its \( n \)-dimensional Lebesgue measure.

The  classical \emph{Hardy–Littlewood theorem} states that $\mathbf{M} $ is a bounded sublinear operator on  $\L^p(\mathbb{R}^n)$ for $p > 1$. Specifically, there exists a constant $C = C(n, p)$, depending only on the dimension $n$ and the exponent $p$, such that  
\begin{equation}\label{eq:hardy_little}
\lVert \mathbf{M} u \rVert_{\L^p(\mathbb{R}^n)} \leq C \lVert u \rVert_{\L^p(\mathbb{R}^n)} \,.
\end{equation}

Another important maximal operator is the \emph{spherical maximal function}, defined as  
\[
\mathbf{S}u(x) = \sup_{r > 0} \dashint_{\mathbb{S}(x, r)} \lvert u(z) \rvert \, \mathrm{d}z \,,
\]
where \( \mathbb{S}(x, r) = \partial \mathbb{B}(x, r) \) denotes the sphere of radius \( r \) centered at \( x \). The integral average here is computed with respect to the \((n-1)\)-dimensional surface area measure on \( \mathbb{S}(x, r) \).

The spherical maximal function was introduced to harmonic analysis by Stein~\cite{St}. A foundational result in this context, proven by  Stein~\cite{St} for $n \geq 3 $ and Bourgain~\cite{Bo} for $n = 2$, states that $\mathbf{S}$ is bounded on $\L^p(\mathbb{R}^n)$ for \( p > \frac{n}{n-1} \). Specifically, there exists a constant \( C = C(n, p) \) such that  
\begin{equation}\label{eq:bour_stein}
\lVert \mathbf{S}u \rVert_{\L^p(\mathbb{R}^n)} \leq C \lVert u \rVert_{\L^p(\mathbb{R}^n)} \quad \text{for } p > \frac{n}{n-1} \,.
\end{equation}
A more general family of maximal operators, introduced in~\cite{IO} and parameterized by $\theta \geq 1$, is given by
\begin{equation}\label{eq:max_ope}
\mathcal{M}_\theta u(x) = \sup_{r > 0} \left[ \frac{n}{r^n} \int_0^r t^{n-1} \left( \dashint_{\mathbb{S}(x, t)} \lvert u(y) \rvert \, \mathrm{d}y \right)^\theta \mathrm{d}t \right]^{\frac{1}{\theta}} \, .
\end{equation}
This family of operators \( \mathcal{M}_\theta \) interpolates between the Hardy–Littlewood maximal function and the spherical maximal function. Specifically:  
\begin{itemize}
 \item When $ \theta = 1 $, $ \mathcal{M}_\theta $ equals the Hardy–Littlewood maximal function $\mathbf{M}$.  
 \item As \( \theta \to \infty \), \( \mathcal{M}_\theta \) converges to the spherical maximal function $\mathbf{S}$.  
\end{itemize}
As a consequence of the maximal inequalities in~\eqref{eq:hardy_little}  and~\eqref{eq:bour_stein}, it was shown in~\cite{IO}, that there exists a constant $C=C(\theta , p , n)$ such that
\begin{equation}\label{eq:iwaniec_onninen}
\Vert \mathcal{M}_\theta u \rVert_{\L^p(\mathbb{R}^n)} \leq C \lVert u \rVert_{\L^p(\mathbb{R}^n)} \qquad \text{for} \quad  p > \frac{n}{n-1+\frac{1}{\theta}} \,.
\end{equation}

\section{Proof of Theorem \ref{thm:main}, necessity}

We next move onto showing that the condition \eqref{eq:internalDouglas} in our main theorem is necessary, i.e. that it is satisfied by any $\W^{1,p}$-homeomorphism between $\dd$ and $\yy$.

\begin{proof} Let $h : \bar{\dd} \to \bar{\yy}$ be a $\W^{1,p}$-homeomorphism onto the Jordan domain $\yy$. We again use the dyadic decomposition of boundary arcs $I_{n,k}$ to decompose $\partial \dd$. This time we use the dyadic decomposition in order to split the double integral in the internal Douglas condition as follows
\begin{align*}
\numberthis \label{eq:twosums}\int_{\partial \dd}\int_{\partial \dd} &\frac{d_{\yy}(\varphi(x),\varphi(y))^p}{|x-y|^p} \, dx \, dy \\&= 2\sum_{n=1}^\infty \sum_{k=2}^{2^n-1} \int_{I_{n,k-1}}\int_{I_{n,k+1}} \frac{d_{\yy}(\varphi(x),\varphi(y))^p}{|x-y|^p} \, dx \, dy \\&+ 2\sum_{n=1}^\infty \sum_{k=2}^{2^n-2} \int_{I_{n,k-1}}\int_{I_{n,k+2}} \frac{d_{\yy}(\varphi(x),\varphi(y))^p}{|x-y|^p} \, dx \, dy.
\end{align*}
The justification for this equality comes from the fact that, disregarding common boundaries, the sets of the type $I_{n,k-1} \times I_{n,k+1}$ and $I_{n,k-1} \times I_{n,k+2}$ are disjoint and cover the set $\{(x,y) \in \partial \dd \times \partial \dd : x < y\}$. This is perhaps best understood visually, and this fact is illustrated in Figure \ref{fig:box}.

\begin{figure}[h]\centering
\includegraphics[scale=0.75]{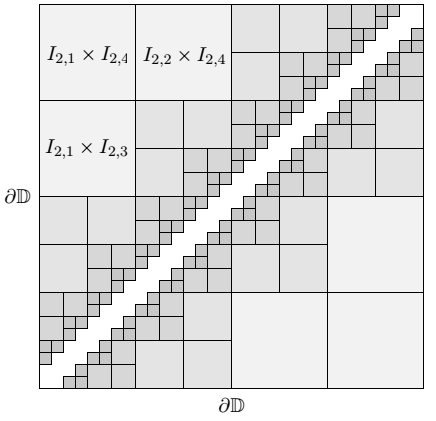}
\caption{Decomposing $\partial \dd \times \partial \dd$ into products of dyadic intervals.}\label{fig:box}
\end{figure}

The basic idea now is to apply some geometric reasoning for each of these pairs of intervals. We will proceed to find an estimate for the first sum on the right hand side from above by a quantity depending on $||Dh||_{\L^p}$. The treatment for the second sum will be essentially the same as for the first one, so we postpone it for now.

We must hence find a way to estimate a single term
\[\int_{I_{n,k-1}}\int_{I_{n,k+1}} \frac{d_{\yy}(\varphi(x),\varphi(y))^p}{|x-y|^p} \, dx \, dy.\]
Let us first interpret the area of integration in a slightly different way. We switch to a coordinate system where we vary a center $m$ over the interval $I_{n,k+1}$ and replace $x,y$ by $m-r$ and $m+r$, where $r$ ranges from $2^{-n-1}$ to $2^{-n+1}$. This is a linear change of variables into a slightly larger domain of integration, so we find the estimate
\begin{align}\label{eq:pairinterval}
\int_{I_{n,k-1}}\int_{I_{n,k+1}} &\frac{d_{\yy}(\varphi(x),\varphi(y))^p}{|x-y|^p} \, dx \, dy \\&\notag \leq C \int_{2^{-n-1}}^{2^{-n+1}}\int_{I_{n,k}} \frac{d_{\yy}(\varphi(m-r),\varphi(m+r))^p}{|2r|^p} \, dm \, dr.
\end{align}

We next need to make a geometrical consideration, see Figure \ref{fig:spherical}. Let us define the dyadic region $G_{n,k} := \{s e^{i\theta} : e^{i\theta} \in I_{n,k},\, 2^{-n-1} \leq 1-s \leq 2^{-n}\}$, and $S_{z,r}$ denotes a circle of radius $r$ centered at $z$. Fix a radius $s \in [1-2^{-n},1-2^{-n-1}]$, so that $sm$ is a point in $G_{n,k}$ which lies on the ray between $m$ and the origin. We note that there is a circle with center $sm$ passing through both $m-r$ and $m+r$, and the radius $R^* = R^*(m,r,s)$ of such a circle is between $r$ and $8r$. Moreover, the transformation $r \mapsto R^*$ is bilipschitz with a constant that can be chosen to be uniform in terms of $k,n,m,$ and $s$. This observation will be required for the forthcoming estimate.

\begin{figure}[h]\centering
\includegraphics[scale=0.2]{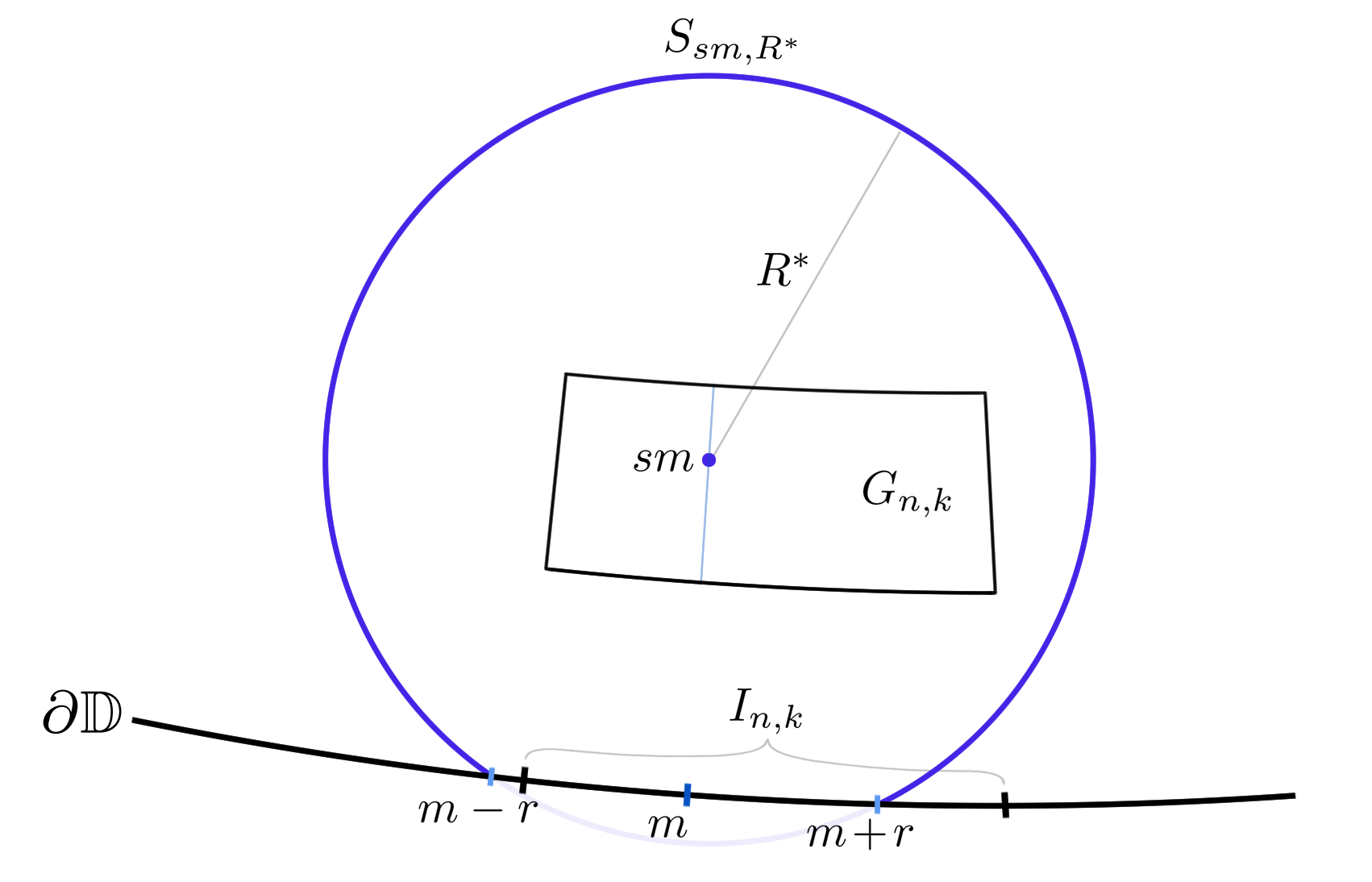}
\caption{Illustration of the geometric construction used to apply the spherical maximal function.}\label{fig:spherical}
\end{figure}

Returning to \eqref{eq:pairinterval}, we note that the internal distance expression $d_{\yy}(\varphi(m-r),\varphi(m+r))$ in the innermost integral is controlled from above by the length of the image curve of $S_{sm,R^*}$ under $h$, giving
\[\frac{d_{\yy}(\varphi(m-r),\varphi(m+r))^p}{|2r|^p} \leq \frac{|h(S_{sm,R^*})|^p}{|2r|^p} \leq C \left(\dashint_{S_{sm,R^*}} |Dh| \right)^p.\]
Note that $h$ is not actually defined on the whole circle $S_{sm,R^*}$, as part of it lies outside of $\dd$, so we interpret $h(S_{sm,R^*})$ as the image of the part of $S_{sm,R^*}$ which lies inside $\dd$. Similarly we interpret $|Dh|$ as $0$ outside of $\dd$. Continuing at \eqref{eq:pairinterval}, we find that
\begin{align*}
\int_{I_{n,k-1}}\int_{I_{n,k+1}} &\frac{d_{\yy}(\varphi(x),\varphi(y))^p}{|x-y|^p} \, dx \, dy \\&\leq
C \int_{2^{-n-1}}^{2^{-n+1}}\int_{I_{n,k}} \left(\dashint_{S_{sm,R^*}} |Dh| \right)^p \, dm \, dr
\\&\leq C \int_{I_{n,k}} \frac{1}{2^{-n}}\int_{2^{-n-1}}^{2^{-n+1}} r \left(\dashint_{S_{sm,R^*}} |Dh| \right)^p \, dr \, dm
\\&\overset{\ast}\leq C\, 2^{-n} \int_{I_{n,k}} \frac{2}{2^{-2n+4}}\int_{0}^{2^{-n+2}} R^* \left(\dashint_{S_{sm,R^*}} |Dh| \right)^p \, dR^* \, dm
\\&\leq C\, 2^{-n} \int_{I_{n,k}} \left(\M_p |Dh|(sm)\right)^p\, dm.
\end{align*}
Here at $\ast$ we have used the bilipschitz-equivalence of $r$ and $R^*$ to change the variable of integration. Since the above chain of estimates is independent of the choice of $s$, we may take an average of $s$ over the interval $[1-2^{-n},1-2^{-n-1}]$ to get
\begin{align*}
\int_{I_{n,k-1}}\int_{I_{n,k+1}}& \frac{d_{\yy}(\varphi(x),\varphi(y))^p}{|x-y|^p} \, dx \, dy \\&\leq C\, \dashint_{1-2^{-n}}^{1-2^{-n-1}}2^{-n}\int_{I_{n,k}} \left(\M_p |Dh|(s m)\right)^p \, dm\, ds
\\&\leq C \int_{1-2^{-n}}^{1-2^{-n-1}}\int_{I_{n,k}} \left(\M_p |Dh|(s m)\right)^p \, dm\, ds
\\&\leq C \int_{G_{n,k}} \left(\M_p |Dh|(z)\right)^p \, dz.
\end{align*}
Here the planar maximal operator $\M_p$ is defined by the formula given in~\eqref{eq:max_ope}.

As the $G_{n,k}$ are disjoint subsets of $\dd$, we may sum over $n$ and $k$ to finally obtain that
\begin{align*}\sum_{n,k}\, \int_{I_{n,k-1}}\int_{I_{n,k+1}} \frac{d_{\yy}(\varphi(x),\varphi(y))^p}{|x-y|^p} \, dx \, dy &\leq C \int_{\dd} \left(\M_p |Dh|(z)\right)^p \, dz \\&\leq C\|\M_p\|_{\L^p (\R^2)}^p \\&\leq  C \|Dh\|_{\L^p (\mathbb R^2)}^p\\&  = C \|Dh\|_{\L^p (\mathbb D)}^p.\end{align*}
The last inequality follows from~\eqref{eq:iwaniec_onninen} since $p > \frac{2}{1+\frac{1}{p}}$.
This finishes our estimates for the first sum in \eqref{eq:twosums}. We explain briefly what modifications to this argument need to be made to also treat the second sum.

In the second sum, the terms to be estimated are of the form
\[\int_{I_{n,k-1}}\int_{I_{n,k+2}} \frac{d_{\yy}(\varphi(x),\varphi(y))^p}{|x-y|^p} \, dx \, dy.\]
The only difference to the first sum is that the integration domain $I_{n,k+2}$ in the inner integral is one dyadic step further away from the interval $I_{n,k}$. We may proceed with the estimate as in the previous case, but simply increase the area of definition of the variable $r$ to $[2^{-n-1},2^{-n+2}]$ instead, having doubled the upper limit. This has no major consequences to the rest of the arguments, as we may still do the same geometric consideration which culminates in an estimate of $\M_p|Dh|$ over the dyadic region $G_{n,k}$. The only difference in the estimates is an additional constant factor coming from the fact that the domain of definition of $R^*$ is also increased.

Hence both of the terms in \eqref{eq:twosums} have been shown to be finite, which finishes the proof.
\end{proof}

\section{Proof of Theorem \ref{thm:rectifiable_p>2}}
In this section, we prove Theorem \ref{thm:rectifiable_p>2} by constructing, for each $p > 2$, an example of a rectifiable Jordan domain $\yy $ and a boundary map $\varphi \colon \partial \dd \to \partial \yy $ that satisfies the $p$-Douglas condition but does not admit a homeomorphic $\W^{1,p}$-extension. However, before presenting the proof, we establish that such an example cannot exist if the boundary of  $\yy $ is piecewise smooth.

\begin{remark}\label{rm:fail_proof}
Let us give a direct proof of the following, perhaps surprising, consequence of Theorem \ref{thm:main}: If $\yy$ has  a piecewise smooth boundary, $p > 2$, and $\varphi : \partial \dd \to \partial \yy$ is a homeomorphic boundary map, then $\varphi$ satisfies the $p$-Douglas condition if and only if it satisfies the internal $p$-Douglas condition. The perhaps unexpected part of this result is the fact that the internal distance on $\yy$ may not be comparable to the Euclidean distance at all, yet there is still a correspondence between the associated conditions.
\\\\
\noindent \emph{Proof or Remark~\ref{rm:fail_proof}.} It is enough to show that the internal $p$-Douglas condition follows from the classical one. The crucial reason why this is true, and why $p > 2$ is needed, is given by the result of Lemma 2.1 in \cite{KOext3}. Essentially, this lemma allows us to conclude that if $p > 2$, then in the $p$-Douglas condition it is possible to omit pairs $x,y$ whose images under $\varphi$ belong to different smooth pieces on the boundary, without weakening the condition. This in turn allows us to argue as follows.

We wish to estimate the double integral in \eqref{eq:internalDouglas} from above. First of all, it is enough to restrict the integral to pairs of points $x,y \in \partial \dd$ which are close together, since if $|x - y|$ is bounded uniformly below, the denominator can be estimated away. Moreover, there is no need to consider points $x,y$ whose images under $\varphi$ are on the same smooth piece on $\partial \yy$, as the internal distance between such points in $\yy$ is comparable to the Euclidean one.

Hence we consider a case where $\varphi(x) \in \Gamma_1$ and $\varphi(y) \in \Gamma_2$, where $\Gamma_1$ and $\Gamma_2$ are neighboring smooth pieces of $\partial \yy$. Suppose that the point at which these pieces meet is $\varphi(z)$, with $z$ between $x$ and $y$ on $\partial \dd$. Then
\[
\begin{split}
d_{\yy}(\varphi(x),\varphi(y)) & \leq d_{\yy}(\varphi(x),\varphi(z))  + d_{\yy}(\varphi(z),\varphi(y)) \\ & \leq C \left(|\varphi(x)-\varphi(z)| + |\varphi(z) - \varphi(y)|\right)
\end{split}
\]
for an appropriate constant $C$ depending on the geometry of $\yy$. One can now conclude by estimating the double integral as in the second chain of inequalities on page 10 in \cite{KOext3}.
\end{remark}

We now continue to providing the counterexample of Theorem \ref{thm:rectifiable_p>2}.

\begin{proof}[Proof of Theorem \ref{thm:rectifiable_p>2}]
Fix $p > 2$, and let us begin the construction of the domain $\yy$ with a curve $\Gamma$ whose endpoints will be at $(0,1)$ and $(1,0)$. The curve will stay within the area bounded by the two rays $R_\pm = \{(1-x,\pm x) : x > 0\}$. Intuitively, $\Gamma$ will consist of line segments which zigzag between these two rays.

Moreover, this curve will be parametrized by a map $g : [0,1] \to \Gamma$, and we will now define both $\Gamma$ and $g$. We start by splitting the interval $[0,1]$ into countably many subintervals $I_n = [A_n,A_{n+1}]$, with $(A_n)$ being the decreasing sequence defined by $A_n = n^{-p/(p-2)}$.

Each $I_n$ will be mapped linearly to a line segment $J_n \subset \Gamma$ by $g$, and $\Gamma$ is therefore defined as the union of such line segments. The line segments $J_n$ will have one endpoint on $R_+$ and the other one on $R_-$, with the order alternating with the parity of $n$. In fact, we may make the choice of the segments $J_n$ so that their lengths are comparable to any particular choice of sequence $(b_n)$ of positive numbers decreasing to zero. For this construction we pick $b_n := n^{-2}$, so our segments $J_n$ are thus defined by the requirement $|J_n| \approx n^{-2}$.

Let us also denote $a_n := A_{n+1} - A_{n}$, so that $a_n \approx n^{-2(p-1)/(p-2)}$, and we also let $B_n := \sum_{k=n}^\infty b_n \approx \frac{1}{n}$.
\begin{figure}[h]
\centering
\includegraphics[scale=0.3]{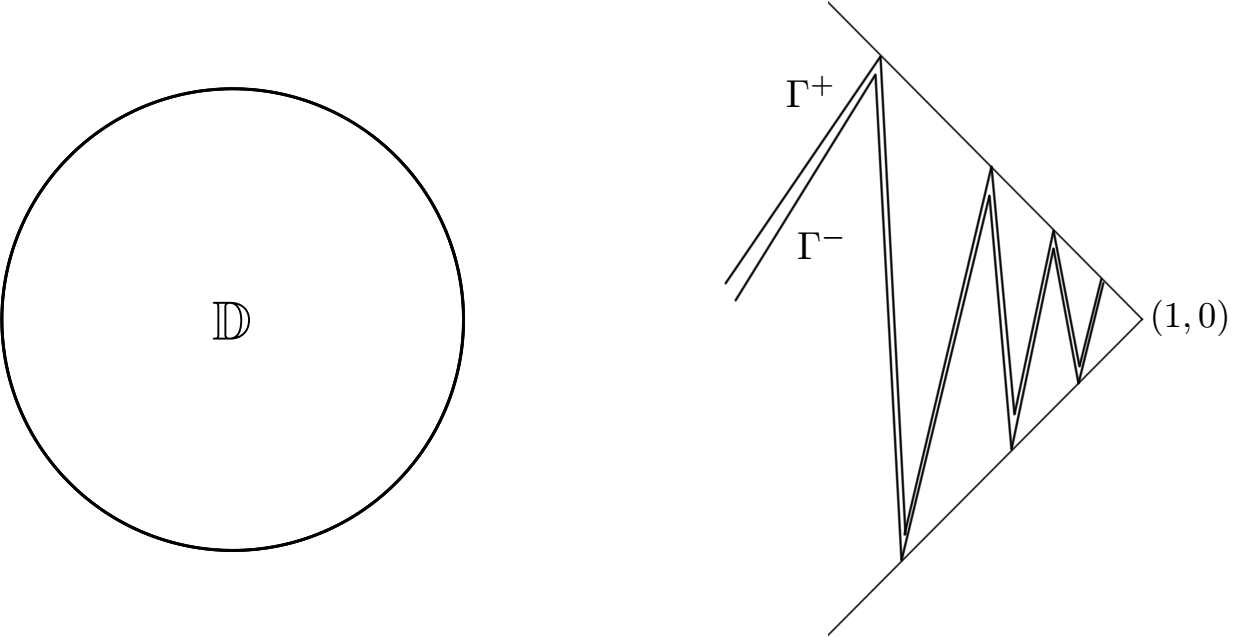}
\caption{The rectifiable target curve $\partial\yy$ defined via curves $\Gamma_\pm$ converging to $(1,0)$.}\label{fig:rectifiable}
\end{figure}

We next define a Jordan curve $\partial\yy$ by fattening the curve $\Gamma$ as in Figure \ref{fig:rectifiable}. In short, each segment $J_n \subset \Gamma$ is replaced by two segments $J_n^\pm \subset \partial \yy$ which are obtained from $J_n$ by moving one endpoint slightly up or down. The endpoints of these new segments are at distance $\epsilon_n$ away from the endpoints of $J_n$, where $\epsilon_n$ is some rapidly decreasing sequence (say, $\epsilon_n := 10^{-100n}$). The new curve $\partial \yy$ will consist of three parts: One small initial segment $\Gamma_0$ near $(0,1)$, and two curves, $\Gamma^+$ and $\Gamma^-$, obtained as the union of the segments $J_n^+$ and $J_n^-$ respectively, and converging at the point $(1,0)$.

On the domain side, without loss of generality we replace the unit disk $\dd$ with an equilateral triangle $T$ with side length one. Let us also pick two sides of $T$ and denote them by $T^\pm \subset \partial T$. To define the boundary map, we reinterpret the parametrization $g : [0,1] \to \Gamma$ as two parametrizations $g^+ : T^+ \to \Gamma^+$ and $g^- : T^- \to \Gamma^-$ in the obvious way. We define the boundary map $\varphi : \partial T \to \partial \yy$ by combining the maps $g^{\pm}$ and mapping the remaining side of the triangle $T$ into the small initial segment $\Gamma_0$ linearly.

We now claim that $\varphi$ gives the required counterexample.

\begin{center}
Step 1: $\varphi$ has a Sobolev extension
\end{center}

Let us first verify that $\varphi$ has a $\W^{1,p}$-extension. For this we simply need to check that $\varphi$ lies in the trace-space of $\W^{1,p}$, equivalently that it satisfies the $p$-Douglas condition
\[\int_{\partial T}\int_{\partial T} \frac{|\varphi(x)-\varphi(y)|^p}{|x-y|^p}\, dx \, dy < \infty.\]
We calculate this integral by splitting it into parts depending on where the points $x,y \in \partial T$ lie. In fact, since the parts $T_+$ and $T_-$ are mapped close together under $\varphi$, it is enough to calculate the double integral over the subdomain $T_+ \times T_+$, as the integral over the other relevant parts is either comparable or strictly smaller than over this part.

We abuse notation here and identify $T_+$ with $[0,1]$, in particular we consider the intervals $I_n = [A_n,A_{n+1}]$ and split the integral further into parts where $x \in I_n, y \in I_m$, and we may also assume $m \geq n$.

\textbf{Case 1.} If $n=m$. Since the map $\varphi$ is linear on each line segment $I_n$, we obtain that
\[\frac{|\varphi(x)-\varphi(y)|}{|x-y|} = \frac{|J_n|}{|I_n|} = \frac{b_n}{a_n} = n^{\frac{2}{p-2}}.\]
This gives that
\begin{align*}
\mathcal{I}_{n,m} := \int_{I_m}\int_{I_n} \frac{|\varphi(x)-\varphi(y)|^p}{|x-y|^p}\, dx \, dy &\leq n^{-4\frac{p-1}{p-2}}n^{\frac{2p}{p-2}} = \frac{1}{n^2}.
\end{align*}
This is summable over $n$, so this case only contributes a finite number to the total integral.

\textbf{Case 2.} If $n+1=m$. This is handled very similarly as the previous case, so we omit the details.

\textbf{Case 3.} If $m > n+1$. Now $|x-y|$ is bounded from below by a positive number $d := A_{n+1} - A_m$ denoting the distance between the two intervals. Moreover, $\varphi(x)$ and $\varphi(y)$ can be considered as points within a triangle that is bounded by the rays $R_+,R_-$ and the segment $J_n$, which has diameter comparable to $|J_n| = b_n = 1/n^2$. Hence $|\varphi(x) - \varphi(y)| \leq 1/n^2$. We therefore obtain that
\begin{align*}
\int_{I_m}\int_{I_n} \frac{|\varphi(x)-\varphi(y)|^p}{|x-y|^p}\, dx \, dy &\leq |I_n| |I_m| d^{-p}n^{-2p} = m^{-2\frac{p-1}{p-2}}n^{-2\frac{p^2 - p - 1}{p-2}}d^{-p}.
\end{align*}
We then write
\begin{equation}\label{eq:nmcalc}d \approx A_n - A_m = \frac{1}{n^{\frac{p}{p-2}}} - \frac{1}{m^{\frac{p}{p-2}}} = m^{-\frac{p}{p-2}}n^{-\frac{p}{p-2}}\left(m^{\frac{p}{p-2}}-n^{\frac{p}{p-2}}\right).\end{equation}
Note that for $x,\alpha > 1$ we have that
\[\frac{x^\alpha - 1}{x - 1} \geq C_\alpha x^{\alpha-1}\]
Therefore with the choice $\alpha = \frac{p}{p-2}$ we have
\[\frac{m^{\frac{p}{p-2}}-n^{\frac{p}{p-2}}}{m-n} \geq C_p m^{\frac{2}{p-2}}.\]
This estimate gives
\begin{align*}\mathcal{I}_{n,m} &\leq m^{-2\frac{p-1}{p-2}}n^{-2\frac{p^2 - p - 1}{p-2}} m^{\frac{p^2}{p-2}}n^{\frac{p^2}{p-2}}m^{\frac{-2p}{p-2}} (m-n)^{-p} \\&= m^{\frac{p^2-4p+2}{p-2}} n^{\frac{-p^2+2p+2}{p-2}}(m-n)^{-p}\end{align*}
We want to sum this over $m$, but since the exponent $s:= \frac{p^2-4p+2}{p-2}$ on $m$ here may be negative we split into two cases.

\emph{Case 3A.} If $s \geq 0$. Then we use the estimate $m^s \leq C((m-n)^s + n^s)$ to find out that
\begin{align*}\sum_{m > n+1}\frac{m^{\frac{p^2-4p+2}{p-2}}}{(m-n)^{p}} &\leq C\, \sum_{k > 1}\frac{k^{\frac{p^2-4p+2}{p-2}}}{k^{p}} + C \,n^{\frac{p^2-4p+2}{p-2}} \sum_{k > 1}\frac{1}{k^{p}} \\&\leq C_1 + C_2 \,n^{\frac{p^2-4p+2}{p-2}} \\&\leq C_3 \, n^{\frac{p^2-4p+2}{p-2}},\end{align*}
where we have substituted $k = m-n$, and the first series in $k$ converges due to the inequality $s - p < -1$.

\emph{Case 3B.} If $s < 0$. Then we do the basic estimate $m \geq n$ and obtain
\[\sum_{m > n+1}\frac{m^{\frac{p^2-4p+2}{p-2}}}{(m-n)^{p}} \leq n^{\frac{p^2-4p+2}{p-2}}\sum_{m > n+1}\frac{1}{(m-n)^{p}} \leq C \,n^{\frac{p^2-4p+2}{p-2}},\]
giving the same conclusion as Case 3A.

In other words, we now find that
\[\sum_{m > n+1} \mathcal{I}_{n,m} = n^{\frac{-p^2+2p+2}{p-2}}\sum_{m > n+1}\frac{m^{\frac{p^2-4p+2}{p-2}}}{(m-n)^{p}} \leq C n^{\frac{-p^2+2p+2}{p-2}}n^{\frac{p^2-4p+2}{p-2}} = n^{-2}.\]
This is summable over $n$, so in conclusion $\sum_n \sum_{m \geq n} \mathcal{I}_{n,m}$ is finite, which gives the desired result.

\begin{center}
Step 2: $\varphi$ has no homeomorphic Sobolev-extension
\end{center}

We apply Theorem \ref{thm:main} to attempt to conclude that the map $\varphi$ defined before has no homeomorphic $\W^{1,p}$-extension. Due to the bilipschitz equivalence between the unit disk and the triangle $T$, it will be enough to verify that
\begin{equation}\label{eq:internalDouglas2}
\int_{\partial T}\int_{\partial T} \frac{[d_{\yy}(\varphi(x),\varphi(y)]^p}{|x-y|^p} \, dx \, dy \ = \ \infty.
\end{equation}
In fact, we will show that the integral diverges even when restricted to a smaller domain, which will be a subset of $T^+ \times T^+$. For this purpose, we identify the side $T^+$ with the unit interval $[0,1]$, and pick points $x \in I_n$ and $y \in I_m$ for some indices $n,m$ satisfying $m \geq 2n$.

Given the way $\partial \yy$ was constructed, the internal distance between two points on the same part of the boundary, say $\Gamma^+$, is comparable to the length of the shorter boundary arc between them. Hence by our choices of $x$ and $y$, the internal distance of $\varphi(x)$ and $\varphi(y)$ may be estimated by
\[d_\yy(\varphi(x),\varphi(y)) \approx \sum_{k=n}^{m} |J_k| = \sum_{k=n}^{m} \frac{1}{k^2} \approx \frac{1}{n} - \frac{1}{m} \approx \frac{1}{n},\]
where we applied the fact that $m \geq 2n$. As in \eqref{eq:nmcalc}, we find that $|x-y| \approx A_{n+1} - A_m \approx n^{-\frac{p}{p-2}}$. This lets us compute that
\begin{align*}
\int_{I_m} \int_{I_n} \frac{[d_{\yy}(\varphi(x),\varphi(y)]^p}{|x-y|^p} \, dx \, dy &\geq c\int_{I_m} \int_{I_n} \frac{n^{-p}}{n^{-\frac{p^2}{p-2}}} \, dx \, dy
\\&= c\,m^{-2\frac{p-1}{p-2}}\, n^{-2\frac{p-1}{p-2} - p + \frac{p^2}{p-2}}
\\&= c\,m^{-2\frac{p-1}{p-2}}\, n^{\frac{2}{p-2}}
\end{align*}
Summing this over $m \geq 2n$ gives
\[\sum_{m=2n}^\infty \int_{I_m} \int_{I_n} \frac{[d_{\yy}(\varphi(x),\varphi(y)]^p}{|x-y|^p} \, dx \, dy \geq c\, n^{\frac{2}{p-2}} \, (2n)^{1-2\frac{p-1}{p-2}} = c_1\, n^{-1}.\]
The right hand side is not summable over $n$, which proves our claim as the sets $I_n \times I_m$ constitute disjoint subsets of $\partial T \times \partial T$.
\end{proof}

\section{Proof of Theorem \ref{thm:p=1case}}

We provide here an example of a Jordan domain $\yy$ and a homeomorphism $h : \bar{\dd} \to \bar{\yy}$ which lies in $\W^{1,1}(\dd)$, but whose restriction to the boundary does not satisfy the internal Douglas condition \eqref{eq:internalDouglas} for $p = 1$.

\begin{proof}
Let us construct $\varphi$ and the domain $\yy$ at the same time. We first pick a center point, say $(1,0)$, on $\partial \dd$. We next define a sequence of boundary arcs $I_n \subset \partial \dd$ from the upper half part of the unit circle such that $|I_n| = 2^{-n}$, the intervals $I_n$ are mutually disjoint except for $I_n$ and $I_{n+1}$ sharing a common endpoint, and so that $I_n$ approaches the point $(1,0)$ as $n \to \infty$. We define another such series of intervals $I_n^*$ by reflecting these intervals across the $x$-axis.

We now define the main part of the boundary $\partial \yy$ by mapping each $I_n$ and $I_n^*$ via the map $\varphi$ in constant speed to target curves $J_n$ and $J_n^*$ which we now define. These image curves are chosen to be piecewise linear curves zigzagging within a conical region of the plane just as in the construction of the previous section. See Figure \ref{fig:zigzag2} for a visual. The only important restriction we place is that we have the length estimate
\[|J_n| \approx |J_n^*| \approx \frac{2^n}{n^2},\]
where $\approx$ means comparable via universal constant. We also place the curves $J_n$ and $J_n^*$ very close together, so that the internal distance between two points on $J_n$ becomes comparable to the length of the boundary between them.

The rest of the boundary of $\partial \yy$ may be defined via connecting the two initial endpoints of $J_0$ and $J_0^*$ via a line segment, and mapping the remaining part of $\partial \dd$ into this line segment at constant speed via $\varphi$.
\begin{figure}[h]
\centering
\includegraphics[scale=0.25]{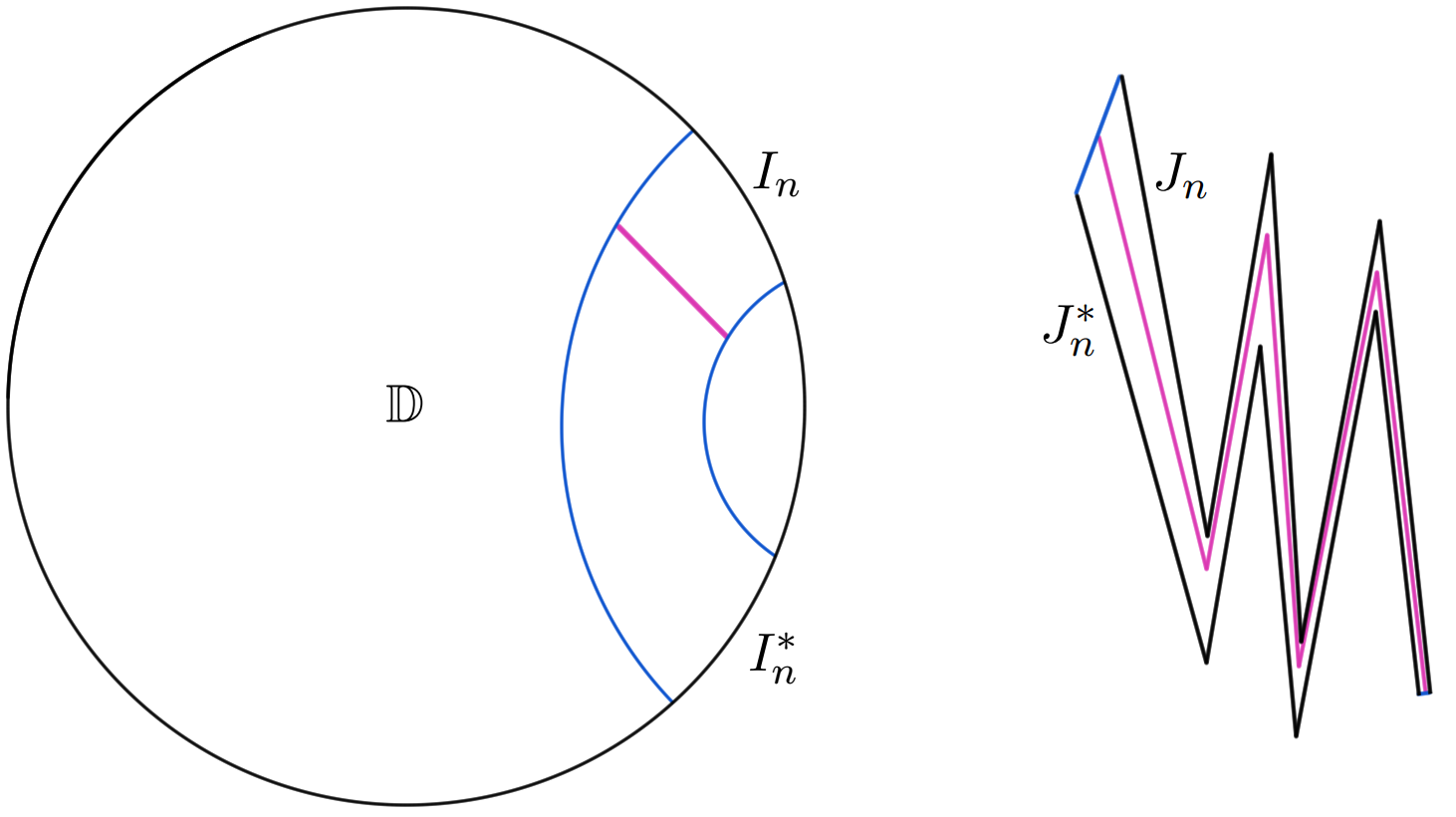}
\caption{The map $h : \dd \to \yy$, with the image curve of one radial segment shown. Note that only a part of the target boundary is pictured.}\label{fig:zigzag2}
\end{figure}

Let us now prove that $\varphi$ has a homeomorphic extension in $\W^{1,1}(\dd)$. The construction is fairly straightforward, so we omit some details here. The main idea is simply to connect the endpoints of $I_n$ to the respective endpoints of $I_n^*$ via circles perpendicular to the boundary in $\partial \dd$, defining an annular region $A_n$ within $\dd$. We may cover such a region $A_n$ with ``radial segments", i.e. curves connecting the two boundary circles, which may be chosen as straight line segments from one circle to another pointing at $(1,0)$, except near $I_n$ and $I_n^*$ at the boundary, where they should be curved slightly to account for the curvature of $\partial \dd$.

The extension $h$ will map each $A_n$ to a corresponding region in $\yy$ which lies between the curves $J_n$ and $J_n^*$. Since the curves $J_n$ and $J_n^*$ lie near each other, it is natural to define the map $h$ in $A_n$ by mapping each radial segment of $A_n$ into a corresponding piecewise linear curve between $J_n$ and $J_n^*$ which follows these curves closely. See Figure \ref{fig:zigzag2}.

In particular, this gives the Sobolev-estimate
\[\int_{A_n} |Dh| \, dz \approx 2^{-n}|J_n| \approx \frac{1}{n^2}.\]
As this quantity is summable over $n$, our map lies in the class $\W^{1,1}(\dd)$.

In order to estimate the double integral in the internal Douglas condition, let us consider the mutually disjoint sets $I_n \times I_m$, with $m \geq n+2$. We wish to estimate the internal distance $d_{\yy}(\varphi(x),\varphi(y))$, where $x \in I_n$ and $y \in I_m$. Since the boundary of the target has been chosen to be the union of sufficiently thin ``tubes", any curve within $\yy$ connecting $\varphi(x)$ with $\varphi(y)$ must have length comparable to the length of the boundary between these two points. Hence
\[d_{\yy}(\varphi(x),\varphi(y)) \approx \sum_{k=n}^m |J_k| \approx \sum_{k=n}^m \frac{2^k}{k^2} \approx \frac{2^m}{m^2}.\]
Moreover, we have that $|x-y| \approx 2^{-n}$. Hence
\[\int_{I_n}\int_{I_m} \frac{d_{\yy}(\varphi(x),\varphi(y))}{|x-y|} dx dy \approx 2^{-n}2^{-m} \frac{2^m}{m^2 2^{-n}} = \frac{1}{m^2}.\]
However, summing over $n$ and $m$, we find that
\[\sum_{n=1}^\infty \sum_{m=n+2}^\infty \frac{1}{m^2} \approx \sum_{n=1}^\infty \frac{1}{n} = \infty.\]
This proves the claim that $\varphi$ does not satisfy the internal Douglas condition.

\end{proof}

\end{document}